\theoremstyle{plain}
\newtheorem{theo}[equation]{Theorem}
\newtheorem{pro}[equation]{Proposition}
\newtheorem{coro}[equation]{Corollary}
\newtheorem{lem}[equation]{Lemma}
\newtheorem{defi}[equation]{Definition}
\newtheorem{rem}[equation]{Remark}
\def\vn{\varepsilon}
\def\ot{\otimes}
\def\om{\omega}
\def\mg{\mathfrak{g}}
\def\lan{\langle}
\def\ran{\rangle}
\def\al{\alpha}
\def\be{\beta}
\def\De{\Delta}
\def\ga{\gamma}
\def\si{\sigma}
\def\om{\omega}
\def\vep{\varepsilon}
\def\de{\delta}
\def\pa{\partial}
\def\La{\Lambda}
\def\la{\lambda}
\def\O{{\mathcal O}}
\def\bN{{\mathbb N}}
\def\bZ{{\mathbb Z}}
\def\bQ{{\mathbb Q}}
\def\bK{{\mathbb K}}
\def\bC{{\mathbb C}}
\def\lra{\longrightarrow}
\def\dim{\mbox{\rm dim}\,}
\def\pa{\partial}
\def\lra{\longrightarrow}
\def\al{\alpha}
\def\be{\beta}
\def\ga{\gamma}
\def\si{\sigma}
\def\De{\Delta}
\def\de{\delta}
\def\vn{\varepsilon}
\def\ot{\otimes}
\def\la{\lambda}
\def\La{\Lambda}
\def\om{\omega}
\def\lra{\longrightarrow}
\def\mg{\mathfrak {g}}
\def\sm1{_{(-1)}}
\def\s1{_{(1)}}
\def\s2{_{(2)}}
\def\s3{_{(3)}}
\def\s0{_{(0)}}
\def\um1{^{(-1)}}
\def\u1{^{(1)}}
\def\u2{^{(2)}}
\def\u3{^{(3)}}
\def\u0{^{(0)}}
\def\wt{\mbox{\rm wt}}
\def\e{\mbox{\rm e}}
\begin{document}

\title[two-parameter quantum groups, (II)]
{Notes on two-parameter quantum groups, (II) }

\author[Hu]{Naihong Hu$^\star$}
\address{Department of Mathematics, East China Normal University,
Shanghai 200062, PR China} \email{nhhu@math.ecnu.edu.cn}
\thanks{$^\star$N. Hu,
supported in part by the NNSFC (Grants 10431040, 10728102), the
PCSIRT, the National/Shanghai Leading Academic Discipline Project
(Project Number: B407).}

\author[Pei]{Yufeng Pei$^\dag$}
\address{Department of Mathematics, Shanghai Normal University, Guilin Road 100,
Shanghai 200234 PR China}\email{peiyufeng@gmail.com, pei@shnu.edu.cn}
\thanks{$^\dag$Y. Pei,
 supported in part by the NNSFC (Grant
10571119), the ZJNSF (Grant Y607136) and the Leading Academic Discipline Project of Shanghai
Normal University (Grant DZL803).}

\begin{abstract}
This paper is the sequel to \cite{HP1} to study the deformed
structures and representations of two-parameter quantum groups
$U_{r,s}(\mathfrak{g})$ associated to the finite dimensional simple
Lie algebras $\mg$.
An equivalence of the braided tensor categories $\O^{r,s}$
and $\O^{q}$ is explicitly established.

\end{abstract}
\maketitle

\section{Introduction}

In \cite{HP1}, the authors introduced an unified
definition for a class of two-parameter quantum groups
$U_{r,s}(\mg)$ associated to finite-dimensional simple Lie algebras
$\mg$ in terms of the Euler form and showed that the positive parts
of quantum groups are $2$-cocycle deformations of each other as
graded associative algebras if two parameters $r, s$ satisfy
certain conditions. This work is a continuation of the paper
\cite{HP1} to characterize the structure of  two-parameter quantum groups
$U_{r,s}(\mg)$ and  the category of
$U_{r,s}(\mg)$-modules.

In this paper, with the help of  $(r,s)$-skew derivations introduced
in \cite{HP1}, we prove the positive part $U_{r,s}^+$ has a natural
$U_{r,s}(\mg)$-module algebra structure. Partially motivated by
Doi-Takeuchi's \cite{DT} and Majid's \cite{Ma} results on
 Hopf $2$-cocycle deformation theory, we show that
the two-parameter quantum groups $U_{r,s}(\mg)$ can be obtained from the one-parameter quantum
group $U_{q,q^{-1}}(\mg)$ by twisting the multiplication via an
explicit Hopf $2$-cocycle $\si$, that is,
\begin{equation}
U_{r,s}(\mg)\simeq U_{q,q^{-1}}^{\si}(\mg),\quad (\text{as Hopf
algebras}).
\end{equation}
It is noticed that this kind of deformation of the algebra structure
depends on its coalgebra structure. Inspired by
Hodges-Levasseur-Toro's \cite{HLT} work on the multi-parameter
quantum groups, we prove that $U_{r,s}(\mg)$ can be deformed from
$U_{q,q^{-1}}(\mg)$ as bigraded structures by twisting the
multiplication via a bicharacter $\zeta $ of the free abelian group
$Q\times Q$ (where $Q$ is the root lattice of $\mg$):
\begin{equation}
U_{r,s}(\mg)\simeq U_{q,q^{-1},\zeta}(\mg),\quad (\text{as
$Q$-bigrading Hopf algebras}),
\end{equation}
which recovers Theorem 3.3 in \cite{HP1} when it is restricted to
the positive part of $U_{r,s}(\mg)$. As an application, we give a new
and simple proof for the existence of nondegenerate skew Hopf
pairing on $U_{r,s}(\mg)$, which were studied previously in
\cite{BW1,BGH1} where verifying the $(r,s)$-Serre relations to be
preserved resulted in rather involved formulas.

Representation theory of two-parameter quantum groups $U_{r,s}(\mg)$
under the assumption $rs^{-1}$ being nonroot of unity has been
investigated in \cite{BW2, BGH2}, for $\mg$ classical. It was showed
that the category $\O^{r,s}$ of finite-dimensional weight
$U_{r,s}(\mg)$-modules (of type $1$) is a semisimple braided tensor
category. A natural question is to find the explicit relations
between the categories $\O^{r,s}$ and $\O^{q}$, where $\O^{q}$ is
the category of finite-dimensional weight
$U_{q,q^{-1}}(\mg)$-modules (of type $1$) (\cite{HZ}) and $\mg$ is
of finite type. Our main theorem is

\smallskip

{\bf Theorem} {\it As braided tensor categories, the categories
$\O^{r,s}$ and $\O^{q}$ are equivalent.
}

\smallskip

%Most part of this work can be generalized to the multi-parameter
%quantum version, even for $\mg$ being of affine type (see \cite{P}).

This paper is organized as follows. In Section 2, we recall the
definition of the two-parameter quantum groups given in \cite{HP1}
and some basic properties. In Section 3, we show that $U_{r,s}^+$ is
a $U_{r,s}(\mg)$-module algebra. Section 4 is devoted to the study
of a certain Hopf $2$-cocycle deformation of $U_q(\mathfrak g)$. In
Section 5 we discuss bigraded deformation of Hopf algebras.  In
Section 6, we give a new and simple proof for the existence of
nondegenerate skew Hopf pairing on  $U_{r,s}(\mg)$ and obtain an
equivalence of the braided tensor categories.

%{\bf Acknowledgments.} The authors wish to thank Prof. Marc Rosso for his helpful suggestion.

\section{\bf Two-parameter quantum groups}

\subsection{}
Let us start with some notations. For $n>0$, let
\begin{gather*}
(n)_v=1+v+\cdots+v^{n-1}=\frac{v^n-1}{v-1}.\\
(n)_v!=(1)_v(2)_v\cdots(n)_v%=\frac{(v-1)(v^2-1)\cdots(v^n-1)}{(v-1)^n}
\quad \text{and}\quad (0)_v!=1.
\\
\binom{n}{k}_v=\frac{(n)_v!}{(k)_v!(n-k)_v!},\quad
[n]_v=\frac{v^n-v^{-n}}{v-v^{-1}}.
\\
[n]_v!=[1]_v[2]_v\cdots[n]_v ,\quad [0]_v!=1, \quad  \left[n\atop
k\right]_v=\frac{[n]_v!}{[k]_v![n-k]_v!}.
\end{gather*}
Throughout the paper, we denote by $\bZ,\, \bZ_+,\, \bN$, $\bC$ and
$\bQ$ the sets of integers, of positive integers, of non-negative
integers, of complex numbers and of rational numbers, respectively.

\subsection{}
Let $\mg$ be a finite-dimensional simple Lie algebra over a field
$\bK\supseteq\bQ$ and $A=(a_{ij})_{i,j\in I}$ be an associated
Cartan matrix. Let $d_i$ be relatively prime positive integers such
that $d_ia_{ij}=d_{j}a_{ji}$ for $i,j\in I$. Let $\Pi=\{\al_i\mid
i\in I\}$ be the set of simple roots, $Q=\bigoplus_{i\in I}\bZ\al_i$
root lattice, $Q^+=\bigoplus_{i\in I}\bN\al_i$ positive root
lattice, $\La$ weight lattice, and $\La^+$ the set of dominant
weights. Let $\Phi$ be the set of roots and $\Phi^+$ positive roots.
Let $\bQ(r,s)$ be the rational functions field in two variables
$r,\,s$ over $\bQ$. Let $r_i=r^{d_i}, \,s_i=s^{d_i}$ for $i\in I$.
Now let $\bK\supseteq\bQ(r,s)$ be a field such that
$(rs^{-1})^{\frac{1}{m}}\in\bK$ for some $m\in\bZ_{+}$ such that
$m\La\subseteq Q$ for the possibly smallest positive integer $m$. We
always assume that $rs^{-1}$ is not a root of unity. Let
$\lan-,-\ran$ be the Euler bilinear form on $Q\times Q$ defined by
\begin{eqnarray}
\lan i,j\ran:=\lan\al_i,\al_j\ran=
\begin{cases}
&d_ia_{ij}\quad i<j,\\
&d_i\quad\quad\, i=j,\\
&0\quad\quad\ \   i>j.
\end{cases}
\end{eqnarray}
For $\lambda\in\La$, we linearly extend the bilinear form
$\lan-,-\ran$ to be defined on $\La\times \La$ such that
$\lan\lambda,i\ran=\frac1{m}\sum_{j=1}^na_j\lan j,i\ran$, or $\lan
i,\lambda\ran=\frac1{m}\sum_{j=1}^na_j\lan i,j\ran$ for
$\lambda=\frac1{m}\sum_ja_j\al_j$ with $a_j\in\mathbb Z$.
\begin{defi}[Hu-Pei \cite{HP1}]
The two-parameter quantum group $U_{r,s}(\mg)$ is a unital
associative algebra over $\bK$ generated by
$e_i,f_i,\om_i^{\pm1},\om_i'^{\pm1},$ $i\in I$, subject to the
relations:
\begin{eqnarray*}
&(R1)&\qquad \om_i^{\pm1}\om_i^{\mp1}=\om_i'^{\pm1}\om_i'^{\mp1}=1,\\
&(R2)&\qquad [\om_i,\om_j]=[\om_i',\om_j']=[\om_i,\om_j']=0,\\
&(R3)&\qquad \om_ie_j=r^{\lan j, i\ran}s^{-\lan i, j\ran} e_j\om_i,
\qquad\qquad\om_i'e_j=r^{-\lan i,j\ran}s^{\lan j,i\ran} e_j\om_i'.
\\
&(R4)&\qquad\om_if_j=r^{-\lan j, i\ran}s^{\lan i, j\ran}
f_j\om_i,\qquad\qquad \om_i'f_j=r^{\lan i,j\ran}s^{-\lan
j,i\ran}f_j\om_i'.\\
&(R5)&\qquad
e_if_j-f_je_i=\delta_{i,j}\frac{\om_i-\om_i'}{r_i-s_i}.
\\
&(R6)&\qquad\sum_{k=0}^{1-a_{ij}} (-1)^{k}
\binom{1-a_{ij}}{k}_{r_is_i^{-1}} c^{(k)}_{ij}
e_{i}^{1-a_{ij}-k}\,e_{j}\, e_{i}^{k} =0, \qquad\,\; (i\neq
j),\end{eqnarray*}
\begin{eqnarray*}
&(R7)&\qquad \sum_{k=0}^{1-a_{ij}} (-1)^{k}
\binom{1-a_{ij}}{k}_{r_is_i^{-1}} c^{(k)}_{ij}\, f_{i}^{k}\,f_{j}\,
f_{i}^{1-a_{ij}-k} =0, \qquad (i\neq j),
\end{eqnarray*}
where
$$
c^{(k)}_{ij}=(r_is_i^{-1})^{\frac{k(k-1)}{2}} r^{k\lan
j,i\ran}s^{-k\lan i,j\ran},\qquad\textit{for } \ i\neq j.
$$
\end{defi}

The algebra $U_{r,s}(\mg)$ has a Hopf algebra structure
with the comultiplication, the counit and the antipode given by:
\begin{gather*}
\Delta(\om_i^{\pm1})=\om_i^{\pm1}\ot\om_i^{\pm1}, \qquad
\Delta({\om_i'}^{\pm1})={\om_i'}^{\pm1}\ot{\om_i'}^{\pm1},\\
\Delta(e_i)=e_i\ot 1+\om_i\ot e_i, \qquad \Delta(f_i)=1\ot
f_i+f_i\ot \om_i',\\
\vn(\om_i^{\pm1})=\vn({\om_i'}^{\pm1})=1, \qquad\qquad
\vn(e_i)=\vn(f_i)=0,\\
S(\om_i^{\pm1})=\om_i^{\mp1}, \qquad\quad\qquad
S({\om_i'}^{\pm1})={\om_i'}^{\mp1},\\
S(e_i)=-\om_i^{-1}e_i,\qquad S(f_i)=-f_i\,{\om_i'}^{-1}.
\end{gather*}

Let $U_{r,s}^+$ (respectively, $U_{r,s}^-$) be the subalgebra of
$U_{r,s}:=U_{r,s}(\frak g)$ generated by the elements $e_i$
(respectively, $f_i$) for $i\in I$, and $U^0_{r,s}$ the subalgebra of
$U_{r,s}$ generated by $\om_i^{\pm1},\om_i'^{\pm1}$  for $i\in I$.
Moreover, let $U_{r,s}^{\geq0}$ (respectively, $U_{r,s}^{\leq0}$)
be the subalgebra of $U_{r,s}$ generated by the elements
$e_i,\,\om_i^{\pm1}$ for $i\in I$ (respectively, $f_i,\,
\om_i'^{\pm1}$ for $i\in I$). For each $\mu\in Q$ (the root
lattice of $\frak g$), we define elements $\om_{\mu}$ and
$\om_{\mu}'$ by
$$
\om_{\mu}=\prod_{i\in I}\om_{i}^{\mu_i},\quad \om_{\mu}'=\prod_{i\in
I}\om_{i}'^{\mu_i},\qquad\textit{for } \ \mu=\sum_{i\in
I}\mu_i\al_i\in Q.
$$
For $\be\in Q^+$, let
$$
(U_{r,s}^{\pm})_{\pm\be}=\left\{x\in
U_{r,s}^{\pm}\,\left|\,\om_{\mu}x\om_{-\mu}
=r^{\lan\be,\mu\ran}s^{-\lan\mu,\be\ran}x,\, \om_{\mu}'x\om_{-\mu}'
=r^{-\lan\mu,\be\ran}s^{\lan\be,\mu\ran}x,\, \forall\ \mu\in
Q\right\}\right.,
$$
then
$$U_{r,s}^{\pm}=\bigoplus_{\be\in Q^+}(U_{r,s}^{\pm})_{\pm\be}$$ are
$Q^+$-graded.

\subsection{}
Assume that $r=s^{-1}=q$, it is clear that $U_{r,s}(\mg)$ becomes
the one-parameter quantum group $U_{q,q^{-1}}(\mg)$ of
Drinfel'd-Jimbo type with double group-like elements, which is a
unital associative algebra over $\bK$ generated by
$E_i,F_i,K_i^{\pm1},K_i'^{\pm1},(i\in I)$, subject to the following
relations:
\begin{eqnarray*}
 &(1)&\quad
K_i^{\pm1}K_j^{\pm1}=K_j^{\pm1}K_i^{\pm1},\qquad
K_i^{\pm1}K_i^{\mp1}=K_i'^{\pm1}K_i'^{\mp1}=1,\\
&(2)&\quad K_iE_jK_i^{-1}=q_i^{a_{ij}}
E_j, \qquad K_i'E_jK_i'^{-1}=q_i^{-a_{ij}}
E_j,
\\
&(3)&\quad K_iF_{j}K_i^{-1}=q_i^{-a_{ij}}
F_{j},\qquad
K_i'F_{j}K_i'^{-1}=q_i^{a_{ij}}F_{j},\\
&(4)&\quad[\,E_i, F_{j}\,]=\delta_{i,j}\frac{K_i-K_i'}{q_i-q_i^{-1}},\\
&(5)&\quad\sum_{k=0}^{1-a_{ij}} (-1)^{k} {\left[{1-a_{ij}}\atop{k}\right]}_{q_i}
 E_{i}^{1-a_{ij}-k} E_{j}
E_{i}^{k} =0, \quad i\neq j,\\
&(6)&\quad\sum_{k=0}^{1-a_{ij}} (-1)^{k} {\left[{1-a_{ij}}\atop{k}\right]}_{q_i} F_{i}^{1-a_{ij}-k} F_{j}
F_{i}^{k} =0, \quad i\neq
j.
\end{eqnarray*}
Moreover, it is known that $U_{q,q^{-1}}(\mg)$ has a Hopf algebra
structure with the comultiplication, the counit and the antipode
given by:
\begin{align}
& \Delta(K_i^{\pm1})=K_i^{\pm1}\ot K_i^{\pm1}, \qquad\ \
\Delta({K_i'}^{\pm1})={K_i'}^{\pm1}\ot{K_i'}^{\pm1},\nonumber\\
&\Delta(E_i)=E_i\ot 1+K_i\ot E_i, \qquad \Delta(F_i)=1\ot
F_i+F_i\ot K_i',\nonumber\\
&\vn(K_i^{\pm1})=\vn({K_i'}^{\pm1})=1, \qquad\quad\
\vn(E_i)=\vn(F_i)=0,\\
&S(K_i^{\pm1})=K_i^{\mp1}, \qquad\qquad\qquad\
S({K_i'}^{\pm1})={K_i'}^{\mp1},\nonumber\\
&S(E_i)=-K_i^{-1}E_i,\qquad\qquad\qquad\  S(F_i)=-F_i\,{K_i'}^{-1}.\nonumber
\end{align}

By abuse of notation, we denote $U_{q}(\mg):=U_{q,q^{-1}}(\mg)$.
Let $U_{q}^+$ (respectively, $U_{q}^-$) be the subalgebra of
$U_{q}:=U_{q}(\frak g)$ generated by the elements $E_i$
(respectively, $F_i$) for $i\in I$, and $U^0_{q}$ the subalgebra of
$U_{q}$ generated by $K_i^{\pm1},K_i'^{\pm1}$  for $i\in I$.
Moreover, let $U_{q}^{\geq0}$ (respectively, $U_{q}^{\leq0}$) be the
subalgebra of $U_{q}$ generated by the elements $E_i,\,K_i^{\pm1}$
for $i\in I$ (respectively, $F_i,\, K_i'^{\pm1}$ for $i\in I$). For
each $\mu\in Q$ (the root lattice of $\frak g$), we denote $K_{\mu}$
and $K_{\mu}'$ by
$$
K_{\mu}=\prod_{i\in I}K_{i}^{\mu_i},\quad K_{\mu}'=\prod_{i\in
I}K_{i}'^{\mu_i},\qquad\text{for } \ \mu=\sum_{i\in I}\mu_i\al_i\in
Q.
$$
For $\be\in Q^+$, let
$$
(U_{q}^{\pm})_{\pm\be}=\left\{x\in U_{q}^{\pm}\,\left|\, K_{\mu}x
K_{-\mu} =q^{(\mu,\be)}x,\, K_{\mu}'x K_{-\mu}' =q^{-(\mu,\be)}x,\,
\forall\ \mu\in Q\right\}\right.,
$$
then $U_{q}=\bigoplus_{\be\in Q}(U_{q})_{\be}$ is a $Q$-graded
algebra. Moreover, there exists a unique nondegenerate skew Hopf
pairing $\lan\,,\,\ran_{q}$ between $U_{q}^{\leq0}$ and
$U_{q}^{\geq0}$, such that
\begin{align}
&\lan F_i, E_j\ran_q=\de_{i,j}\frac{1}{q_i^{-1}-q_{i}},\\
&\lan xK_{\mu}',yK_{\nu}\ran_q=q^{(\mu,\nu)}\lan x,y\ran_{q}
\end{align}
for any $i, j\in I, x\in U_{q}^{\leq0}, y\in U_{q}^{\geq0},
\mu,\nu\in Q$. Therefore, $U_{q}$ has the Drinfel'd double
structure, that is, as Hopf algebras, we have the following
isomorphism:
\begin{eqnarray}
U_{q}\simeq\mathcal{D}(U_{q}^{\leq0},U_{q}^{\geq0},\lan\,,\,\ran_{q}).
\end{eqnarray}

\section{\bf $U_{r,s}(\mg)$-module algebra structure over $U_{r,s}^+$}
Let $(H,m,1,\De,\vep,S)$ be a Hopf algebra over a field $k$. Recall
that an (associative) algebra $A$ over $k$ an $H$-module algebra if
$A$ has an (left) $H$-module structure such that
\begin{eqnarray*}
(1)\quad h1_A=\vep(h)1_A, \qquad (2)\quad  h(ab)=\sum
(h_{(1)}a)(h_{(2)}b),
\end{eqnarray*}
for $h\in H$, $a, b \in A$ with $\De(h)=\sum h_{(1)}\otimes
h_{(2)}$. Here the second condition means that the multiplication is
a homomorphism of $H$-modules.

\subsection{Skew derivations}
By the definition of coproduct, we have
\begin{equation*}
\Delta(x)\in\bigoplus_{0\le\nu\le\be}(U_{r,s}^+)_{\be-\nu}\om_\nu\otimes (U_{r,s}^+)_{\nu},\quad\forall\  x\in
(U_{r,s}^+)_{\be}.
\end{equation*}
For $i\in I$ and $\be\in Q^+$, define linear maps
$${\hat\pa_i},\,{_i\hat\pa}: (U_{r,s}^+)_{\be}\lra
(U_{r,s}^+)_{\be-\al_i},$$
such that
\begin{gather*}
\Delta(x)=x\otimes 1+\sum_{i\in I}\hat\pa_i(x)\,\om_i\otimes
e_i+\text{the rest},\\
\Delta(x)=\om_\be\otimes x+\sum_{i\in
I}e_i\,\om_{\be-\al_i}\otimes\,_i{\hat\pa}(x)+\text{the rest}.
\end{gather*}
Here in each case ``the rest" refers to terms involving products of
more than one $e_j$ in the second (resp. first) factor. Let
\begin{equation}
\pa_i:=\frac{1}{r_i-s_i}\hat\pa_i,\quad {_i{\pa}}:=\frac{1}{r_i-s_i}{_i\hat{\pa}}.
\end{equation}
Then we have the following lemma

\begin{lem}\label{sk} For any $i,\, j\in I, \, x\in (U_{r,s})_{\be}^+, \, x'\in (U_{r,s}^+)_{\be'}$, $y\in
U_{r,s}^-$, we have

$(\text{\rm i})$ \ \;
$\partial_i(xx')=r^{\lan\be',\al_i\ran}s^{-\lan\al_i,\be'\ran}\pa_i(x)\,x'+x\,\partial_i(x')$,

$(\text{\rm ii})$ \;
$_i\partial(xx')=\,_i\partial(x)\,x'+r^{\lan\al_i,\be\ran}s^{-\lan\be,\al_i\ran}x\,_i\partial(x')$,

$(\text{\rm iii})$ \ $\lan f_iy,\, x\ran_{r,s} =\lan y,\, _i\partial(x)\ran_{r,s}$,

$(\text{\rm iv})$ \ $\lan yf_i,\, x\ran_{r,s} =\lan y,\, \partial_i(x)\ran_{r,s}$,

$(\text{\rm v})$ \ \,
$f_ix-xf_i=\partial_i(x)\,\om_i-\om_i'\,{_i\pa}(x)$,

$(\text{\rm vi})$ \ \,
$\pa_i\,{_j\pa}={_j\pa}\pa_i$.
\end{lem}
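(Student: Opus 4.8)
The plan is to prove (i) and (ii) first, since (iii)--(vi) all reduce to them. For (i) I would expand $\De(xx')=\De(x)\De(x')$ using the defining presentations $\De(x)=x\ot1+\sum_{j}\hat\pa_j(x)\,\om_j\ot e_j+(\ast)$ and likewise for $x'$, where in each case $(\ast)$ gathers the summands whose second tensor factor is a product of at least two of the $e_k$'s. Multiplying out and extracting the homogeneous component lying in $(U_{r,s}^+)_{\be+\be'-\al_i}\,\om_i\ot\bK e_i$, the only surviving contributions are $x\,\hat\pa_i(x')\,\om_i\ot e_i$ and $\hat\pa_i(x)\,\om_i\,x'\ot e_i$; rewriting $\om_ix'=r^{\lan\be',\al_i\ran}s^{-\lan\al_i,\be'\ran}x'\om_i$ via (R3) (equivalently, via the $Q^+$-grading of $U_{r,s}^+$) and comparing with the same component of $\De(xx')$ gives the identity for $\hat\pa_i$, hence (i) after dividing by $r_i-s_i$. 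Part (ii) is the mirror argument using instead $\De(x)=\om_\be\ot x+\sum_j e_j\,\om_{\be-\al_j}\ot{_j\hat\pa}(x)+(\ast\ast)$, with $(\ast\ast)$ gathering the summands having at least two $e_k$'s in the \emph{first} factor: one extracts the component with a single $e_i$ on the left and commutes the relevant group-likes to the left by (R3).

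For (iii) and (iv) I would invoke the skew Hopf pairing $\lan\,,\,\ran_{r,s}$ between $U_{r,s}^{\le0}$ and $U_{r,s}^{\ge0}$ (see \cite{BW1,BGH1}) and use three of its standard features: it is $Q$-graded; it is multiplicative in the left argument, $\lan ab,z\ran_{r,s}=\sum\lan a,z_{(1)}\ran_{r,s}\lan b,z_{(2)}\ran_{r,s}$, and insensitive there to the torus, $\lan u,v\om_\nu\ran_{r,s}=\lan u,v\ran_{r,s}$ for $u\in U_{r,s}^-$; and it is normalized by $\lan f_i,e_j\ran_{r,s}=\de_{ij}/(r_i-s_i)$. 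For (iv), expanding $\lan yf_i,x\ran_{r,s}=\sum\lan y,x_{(1)}\ran_{r,s}\lan f_i,x_{(2)}\ran_{r,s}$ and using that $\lan f_i,-\ran_{r,s}$ is supported in weight $\al_i$, the only term of $\De(x)$ that survives is $\hat\pa_i(x)\,\om_i\ot e_i$, whence $\lan yf_i,x\ran_{r,s}=\lan f_i,e_i\ran_{r,s}\lan y,\hat\pa_i(x)\ran_{r,s}=\lan y,\pa_i(x)\ran_{r,s}$. Part (iii) is the same computation applied to the second presentation of $\De(x)$, the surviving term now being $e_i\,\om_{\be-\al_i}\ot{_i\hat\pa}(x)$.

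For (v) I would induct on the height $\mathrm{ht}(\be)=\sum_i\be_i$. When $\be=0$ both sides are zero, and the case $x=e_j$ follows directly from $\De(e_j)=e_j\ot1+\om_j\ot e_j$ (which gives $\pa_i(e_j)={_i\pa}(e_j)=\de_{ij}/(r_i-s_i)$) together with (R5). For the inductive step, write $x=x'x''$ with $x'\in(U_{r,s}^+)_{\be'}$ and $x''\in(U_{r,s}^+)_{\be''}$ of smaller height, use (v) for $x'$ and $x''$ to rewrite $f_ix'$ and $f_ix''$, and push the group-likes $\om_i,\om_i'$ past $x',x''$ via (R3); the $r,s$-scalars thereby produced are precisely the ones in (i) and (ii), so after regrouping the right-hand side collapses to $\pa_i(x'x'')\,\om_i-\om_i'\,{_i\pa}(x'x'')$.

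Finally, (vi) follows from (iii), (iv) and the nondegeneracy of $\lan\,,\,\ran_{r,s}$ on $U_{r,s}^-\times U_{r,s}^+$: for every $y\in U_{r,s}^-$ one has $\lan y,\pa_i\,{_j\pa}(x)\ran_{r,s}=\lan yf_i,{_j\pa}(x)\ran_{r,s}=\lan(f_jy)f_i,x\ran_{r,s}=\lan f_jy,\pa_i(x)\ran_{r,s}=\lan y,{_j\pa}\,\pa_i(x)\ran_{r,s}$, hence $\pa_i\,{_j\pa}={_j\pa}\,\pa_i$; alternatively, to avoid nondegeneracy one can run a direct induction on $\mathrm{ht}(\be)$ using (i) and (ii), where on generators both composites vanish (they land in $(U_{r,s}^+)_{-\al_i}=0$) and on a product $x'x''$ the ``diagonal'' terms match by induction while the two ``cross'' terms $\pa_i(x')\,{_j\pa}(x'')$ carry scalars that coincide by the bilinearity of $\lan\,,\,\ran$. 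I expect the only real obstacle to be bookkeeping: in (i)--(ii), correctly determining which products of the two expanded coproducts lie in the ``single-$e$'' component and tracking the $r,s$-powers created by commuting $\om_i,\om_i'$ through $Q^+$-homogeneous elements; and in (v), arranging the induction so that the accumulated twists reassemble exactly into the right-hand sides of (i) and (ii). No new idea is needed beyond these.
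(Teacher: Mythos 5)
The paper itself offers no proof of Lemma \ref{sk} (it is imported from \cite{HP1}), so your proposal can only be measured against the standard derivation, and in outline it is exactly that: (i)--(ii) by comparing the ``single $e_i$'' homogeneous components of $\De(x)\De(x')$ and commuting $\om_i$ through $Q^+$-homogeneous elements via (R3), (iii)--(iv) by adjointness under the skew Hopf pairing, (v) by induction on height with the (R3)-scalars reassembling into the Leibniz rules (i)--(ii), and (vi) either from (iii)--(iv) plus nondegeneracy or by a direct induction. I checked the delicate points: in (i)--(ii) the only surviving components are indeed the two you name; in your inductive step for (v) the scalars produced by moving $\om_i$ and $\om_i'$ across $x',x''$ are precisely those of (i) and (ii); and in (vi) the two cross terms $\pa_i(x')\,{_j\pa}(x'')$ carry equal exponents of $r$ and $s$, so that induction closes as claimed.

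The one place you must not wave through is the sign bookkeeping in (iii)--(v). With the conventions as printed here --- $\De(e_j)=e_j\ot 1+\om_j\ot e_j$, (R5), and $\pa_i=\frac{1}{r_i-s_i}\hat\pa_i$ --- one gets $\pa_i(e_j)={_i\pa}(e_j)=\de_{i,j}/(r_i-s_i)$, so your base case for (v) is $f_ie_j-e_jf_i=\de_{i,j}\frac{\om_i'-\om_i}{r_i-s_i}$, whereas $\pa_i(e_j)\om_i-\om_i'\,{_i\pa}(e_j)=\de_{i,j}\frac{\om_i-\om_i'}{r_i-s_i}$: it does not ``follow directly'', it fails by a sign. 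Similarly, the normalization $\lan f_i,e_j\ran_{r,s}=\de_{i,j}/(r_i-s_i)$ you posit is exactly what the printed (iii)--(iv) would force, but it is opposite to the value $\de_{i,j}/(s_i-r_i)$ used in \cite{BW1,BGH1,HP1} and implicit in this paper's own formulas $\lan F_i,E_j\ran_q=\de_{i,j}\frac{1}{q_i^{-1}-q_i}$ and $\lan\,,\,\ran_{q,\zeta}=\lan\,,\,\ran_{r,s}$. The uniform cure is to read the normalization as $\pa_i=\frac{1}{s_i-r_i}\hat\pa_i$, ${_i\pa}=\frac{1}{s_i-r_i}\,{_i\hat\pa}$; with that choice your computations yield (iii)--(v) exactly as stated, with the standard pairing constant. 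So: pin down the pairing conventions (gradedness, the multiplicativity you use, and the constant) explicitly, and carry out the $x=e_j$ verification honestly rather than by assertion; everything else in your plan is correct and is the expected argument.
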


Let ${\e_i},\ {_i\e}: \,U_{r,s}^+\lra U_{r,s}^+$ be defined by
\begin{equation}
\e_i(x)=x\,e_i,\quad {_i\e}\,(x')=e_i\,x',
\end{equation}
for any $x,x'\in (U_{r,s}^+)_{\be}$. Then
\begin{coro}
For any $i,\,j\in I$, we have

$(\text{\rm i})$ \ \;
$\partial_i\,\e_j=r^{\lan j,i\ran}s^{-\lan i,j\ran}\e_j\,\pa_i+\de_{i,j}$,

$(\text{\rm ii})$ \;
${_i\pa}\,{_j\e}=r^{\lan i,j\ran}s^{-\lan j,i\ran}\,{_j\e}\,{_i\pa}+\de_{i,j}$.
\end{coro}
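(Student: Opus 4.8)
The plan is to derive both identities directly from the skew-derivation rules in Lemma~\ref{sk}(i)--(ii) by evaluating on the relevant generators. Recall that $\e_j$ is right multiplication by $e_j$ and $_j\e$ is left multiplication by $e_j$. For part~(i), take $x\in (U_{r,s}^+)_\be$ arbitrary and compute $\pa_i(\e_j(x))=\pa_i(x\,e_j)$. Apply Lemma~\ref{sk}(i) with $x'=e_j\in (U_{r,s}^+)_{\al_j}$, so $\be'=\al_j$: this gives
\begin{equation*}
\pa_i(x\,e_j)=r^{\lan\al_j,\al_i\ran}s^{-\lan\al_i,\al_j\ran}\pa_i(x)\,e_j+x\,\pa_i(e_j).
\end{equation*}
Since $\lan\al_j,\al_i\ran=\lan j,i\ran$ and $\lan\al_i,\al_j\ran=\lan i,j\ran$, the first term is exactly $r^{\lan j,i\ran}s^{-\lan i,j\ran}\e_j(\pa_i(x))$. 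It remains to show $\pa_i(e_j)=\de_{i,j}$, equivalently $\hat\pa_i(e_j)=\de_{i,j}(r_i-s_i)$; but this is immediate from the definition of $\hat\pa_i$ applied to the explicit coproduct $\Delta(e_j)=e_j\ot 1+\om_j\ot e_j$, where the coefficient of $\om_i\ot e_i$ is $\de_{i,j}$. Since $x$ was arbitrary, this proves the operator identity $\pa_i\e_j=r^{\lan j,i\ran}s^{-\lan i,j\ran}\e_j\pa_i+\de_{i,j}$.

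Part~(ii) is entirely parallel, using Lemma~\ref{sk}(ii) with the left-multiplication operator. For $x'\in (U_{r,s}^+)_{\be'}$ compute $_i\pa(\,_j\e(x'))={_i\pa}(e_j\,x')$; applying Lemma~\ref{sk}(ii) with $x=e_j\in(U_{r,s}^+)_{\al_j}$ (so $\be=\al_j$) yields
\begin{equation*}
{_i\pa}(e_j\,x')={_i\pa}(e_j)\,x'+r^{\lan\al_i,\al_j\ran}s^{-\lan\al_j,\al_i\ran}e_j\,{_i\pa}(x'),
\end{equation*}
and again ${_i\pa}(e_j)=\de_{i,j}$ follows by reading off the coefficient of $e_i\om_{\al_j-\al_i}\ot{}_i\hat\pa(e_j)$ in $\Delta(e_j)$. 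Rewriting $\lan\al_i,\al_j\ran=\lan i,j\ran$, $\lan\al_j,\al_i\ran=\lan j,i\ran$ gives the asserted identity.

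I do not anticipate a genuine obstacle here: the only thing to be careful about is bookkeeping with the Euler form (the asymmetry $\lan i,j\ran\neq\lan j,i\ran$ in general) and making sure the grading shifts in Lemma~\ref{sk}(i)--(ii) are instantiated with the correct $\be,\be'$ when the second or first factor is a single generator $e_j$. One alternative, if one prefers to avoid quoting the Leibniz rules, is to verify both identities by a direct induction on the $Q^+$-degree of $x$, using the formula for $\Delta$ on a product and the definition of $\hat\pa_i$, $_i\hat\pa$ — but this merely reproves Lemma~\ref{sk}(i)--(ii) along the way, so invoking the Lemma is the efficient route.
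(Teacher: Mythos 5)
Your route is the intended one: the paper offers no separate argument for this Corollary, which is meant to follow by applying Lemma~\ref{sk}(i),(ii) with the second (resp.\ first) factor equal to the generator $e_j$ and then evaluating the skew derivations on $e_j$; your instantiation with $\be'=\al_j$ (resp.\ $\be=\al_j$) and the identification of the Euler-form exponents is correct.

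There is, however, a concrete slip in the one genuinely quantitative step, namely the value of $\pa_i(e_j)$. From $\De(e_j)=e_j\ot 1+\om_j\ot e_j$ the coefficient of $\om_i\ot e_i$ is $\de_{i,j}$, which gives $\hat\pa_i(e_j)=\de_{i,j}$ and hence, with the normalization $\pa_i=\frac{1}{r_i-s_i}\hat\pa_i$ fixed in the paper, $\pa_i(e_j)=\de_{i,j}/(r_i-s_i)$, not $\de_{i,j}$. Your sentence asserting that $\pa_i(e_j)=\de_{i,j}$ is ``equivalently $\hat\pa_i(e_j)=\de_{i,j}(r_i-s_i)$'' and that this is immediate because the coefficient of $\om_i\ot e_i$ is $\de_{i,j}$ is therefore internally inconsistent: what your computation actually proves is $\hat\pa_i(e_j)=\de_{i,j}$, and the factor $(r_i-s_i)$ is bridged by an ``equivalently'' that is never justified. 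The same remark applies to ${}_i\pa(e_j)$ in part (ii). As written, your argument establishes the stated identities verbatim for the unrescaled maps $\hat\pa_i,\ {}_i\hat\pa$ (the twisted Leibniz rules are insensitive to rescaling, so only the constant term is affected), whereas for $\pa_i,\ {}_i\pa$ as literally defined the constant comes out as $\de_{i,j}/(r_i-s_i)$. Since the constant $\de_{i,j}$ is what the paper relies on later (the lemma for $\overline{\pa_i},\ \overline{{}_i\pa}$ and the module-algebra computation use $\overline{\pa_i}(e_j)=\de_{i,j}$, i.e.\ the convention $\pa_i(e_i)=1$), you should either work with $\hat\pa_i$ throughout or state explicitly which normalization of $\pa_i$ you are using and track the factor $(r_i-s_i)$ honestly, rather than eliding it.
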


\begin{pro}[Hu-Pei \cite{HP1}]\label{ds} For any $i\neq j$,
\begin{align}
\sum_{k=0}^{1-a_{ij}} (-1)^{k}
\binom{1-a_{ij}}{k}_{r_is_i^{-1}} c^{(k)}_{ij}\,{_i\pa}^{1-a_{ij}-k}\,{_j\pa}\,{_i}\pa^{k}&=0,\label{D1}\\
\sum_{k=0}^{1-a_{ij}} (-1)^{k} \binom{1-a_{ij}}{k}_{r_is_i^{-1}}
c^{(k)}_{ij}\,\pa_i^{k}\,\pa_j\,\pa_i^{1-a_{ij}-k}&=0,\label{D2}
\end{align}
where
$$
c^{(k)}_{ij}=(r_is_i^{-1})^{\frac{k(k-1)}{2}} r^{k\lan
j,i\ran}s^{-k\lan i,j\ran},\qquad\textit{for } \ i\neq j.
$$
\end{pro}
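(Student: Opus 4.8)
The plan is to derive both \eqref{D1} and \eqref{D2} from the single defining $(r,s)$-Serre relation $(R7)$ of $U_{r,s}^-$, transported to $U_{r,s}^+$ through the skew Hopf pairing $\lan\,,\,\ran_{r,s}\colon U_{r,s}^-\times U_{r,s}^+\to\bK$ via the adjunction identities of Lemma~\ref{sk}(iii)--(iv). Fix $i\neq j$, put $N=1-a_{ij}$, and set
\[
u=\sum_{k=0}^{N}(-1)^{k}\binom{N}{k}_{r_is_i^{-1}}c^{(k)}_{ij}\,f_i^{k}f_jf_i^{N-k}\in U_{r,s}^-,
\]
which is $0$ by $(R7)$.

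I would prove \eqref{D2} first. Let $x\in(U_{r,s}^+)_{\be}$ be homogeneous and $y\in U_{r,s}^-$ arbitrary. Applying Lemma~\ref{sk}(iv), i.e.\ $\lan zf_i,x\ran_{r,s}=\lan z,\pa_i(x)\ran_{r,s}$, iteratively — stripping the operators off the composition $\pa_i^{k}\pa_j\pa_i^{N-k}$ one at a time, from left to right — one obtains
\[
\lan\, y,\ \pa_i^{k}\pa_j\pa_i^{N-k}(x)\,\ran_{r,s}=\lan\, y\,f_i^{k}f_jf_i^{N-k},\ x\,\ran_{r,s}.
\]
Multiplying by $(-1)^{k}\binom{N}{k}_{r_is_i^{-1}}c^{(k)}_{ij}$ and summing over $k$ gives $\lan y,\,D(x)\ran_{r,s}=\lan y\,u,\,x\ran_{r,s}=0$, where $D$ is the left-hand side of \eqref{D2}. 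Since $D(x)\in(U_{r,s}^+)_{\be-\al_j-N\al_i}$ and $\lan\,,\,\ran_{r,s}$ is nondegenerate on each graded component $(U_{r,s}^-)_{-\ga}\times(U_{r,s}^+)_{\ga}$ (a standard fact, valid here since $rs^{-1}$ is not a root of unity; cf.\ \cite{BW1,BGH1}), it follows that $D(x)=0$; as homogeneous elements span $U_{r,s}^+$, \eqref{D2} follows.

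The proof of \eqref{D1} is the mirror image, using Lemma~\ref{sk}(iii), $\lan f_iz,x\ran_{r,s}=\lan z,{}_i\pa(x)\ran_{r,s}$, instead. The operators in \eqref{D1} are deliberately written as ${}_i\pa^{N-k}\,{}_j\pa\,{}_i\pa^{k}$, with the exponents $k$ and $N-k$ interchanged relative to \eqref{D2}, precisely so that the same stripping now \emph{prepends} the word $f_i^{k}f_jf_i^{N-k}$ to $y$; one then reaches $\lan u\,y,\,x\ran_{r,s}=0$ and concludes by nondegeneracy exactly as before.

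The main thing demanding care is the bookkeeping of the iterated adjunction: one must check that the surviving $f$-word is $f_i^{k}f_jf_i^{N-k}$ \emph{with its coefficient still indexed by the same $k$}, so that the sum over $k$ collapses to $(R7)$ with no reindexing (an accidental swap $k\mapsto N-k$ would replace $c^{(k)}_{ij}$ by $c^{(N-k)}_{ij}$, which differ by a $k$-dependent factor, and the argument would fail). I would also point out that this route invokes only the nondegeneracy of $\lan\,,\,\ran_{r,s}$; if one prefers the new construction of that pairing in Section~6 to remain logically independent, there is a self-contained alternative — use the commutation rules of the Corollary to push all the $\pa_i,\pa_j$ in \eqref{D2} to the right past the right-multiplication operators $\e_l$, reduce to $\pa$'s acting on $1$ (which vanish) plus $\de$-terms, and check that the remaining identity amounts to the $(r,s)$-Serre relation $(R6)$ holding in $U_{r,s}^+$; that version is more computational but uses nothing beyond Lemma~\ref{sk}(i)--(ii), the Corollary, and $(R6)$.
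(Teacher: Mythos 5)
This paper does not actually prove Proposition~\ref{ds}; it is imported from \cite{HP1}, so there is no in-text argument to measure yours against, and your proposal must be judged on its own. On those terms the main route is correct: the adjunction bookkeeping via Lemma~\ref{sk}(iii)--(iv) works exactly as you say, since stripping the leftmost derivation one at a time turns $\lan y,\pa_i^{k}\pa_j\pa_i^{1-a_{ij}-k}(x)\ran_{r,s}$ into $\lan y\,f_i^{k}f_jf_i^{1-a_{ij}-k},x\ran_{r,s}$ and $\lan y,{_i\pa}^{1-a_{ij}-k}\,{_j\pa}\,{_i\pa}^{k}(x)\ran_{r,s}$ into $\lan f_i^{k}f_jf_i^{1-a_{ij}-k}\,y,x\ran_{r,s}$, with the coefficient still attached to the same index $k$; summing then collapses both sides onto the relation (R7), and nondegeneracy of the pairing on the graded pieces $(U_{r,s}^-)_{-\be}\times(U_{r,s}^+)_{\be}$ finishes the argument. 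The one caveat is precisely that input: within this paper the pairing on $U_{r,s}(\mg)$ is only produced in Section~6 (Proposition~\ref{deform3}), so invoking its nondegeneracy in Section~3 inverts the exposition (though it is not circular, since Proposition~\ref{deform3} rests on Proposition~\ref{bigrading}, not on Proposition~\ref{ds}), and the external sources you cite, \cite{BW1,BGH1}, establish nondegeneracy for the classical types only, whereas the proposition is stated for arbitrary finite-dimensional simple $\mg$; so for full generality the citation does not quite cover the claim. Your sketched alternative is therefore the version I would encourage you to write out: induction on the degree of $x$ using $\pa_i\,\e_l=r^{\lan l,i\ran}s^{-\lan i,l\ran}\e_l\,\pa_i+\de_{i,l}$, where pushing $\e_j$ through produces delta-terms whose total coefficient is a multiple of $\sum_k(-1)^k\binom{1-a_{ij}}{k}_{r_is_i^{-1}}(r_is_i^{-1})^{k(k-1)/2}$, which vanishes by the $v$-binomial theorem, and pushing $\e_i$ through produces a cancellation of the same kind (this is the only nontrivial check, and it does go through). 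That route is self-contained, uses nothing beyond Lemma~\ref{sk}, its Corollary and (R6)--(R7), and is what a from-scratch proof in the spirit of \cite{HP1} looks like; the pairing route is shorter but buys that brevity at the cost of the nondegeneracy hypothesis and the forward reference.
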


Define two linear operators over $U_{r,s}^+$ as follows:
$$\overline{\pa_i},\  \overline{_i\pa}: \; U_{r,s}^+\longrightarrow U_{r,s}^+$$
by
\begin{eqnarray*}
\overline{\pa_i}(x)&=&r^{-\lan\al_i,\be-\al_i\ran}s^{\lan\be-\al_i,\al_i\ran}{_i\pa(x)},\ \quad \forall\; x\in (U_{r,s}^+)_{\be},\\
\overline{_i\pa}(x)&=&r^{-\lan\be-\al_i,\al_i\ran}s^{\lan\al_i,\be-\al_i\ran}\pa_i(x),\
\quad \forall\; x\in (U_{r,s}^+)_{\be}.
\end{eqnarray*}
\begin{lem} For any $i,j\in I$, $x\in(U_{r,s}^+)_{\be}, \; x'\in(U_{r,s}^+)_{\be'}$, we have
\begin{align*}
\overline{\pa_i}(xx')&=\,r^{-\lan\al_i,\be' \ran}s^{\lan\be',\al_i\ran}\overline{\pa_i}(x)\,x'+x\,\overline{\pa_i}(x'),\\
\overline{_i\pa}(xx')&=\,\overline{_i\pa}(x)\,x'+r^{-\lan\be,\al_i\ran}s^{\lan\al_i,\be\ran}x\,\overline{_i\pa}(x'),\\
\overline{\pa_i}\e_j&=r^{-\lan i,j\ran}s^{\lan j,i\ran}\e_j\,\overline{\pa_i}+\de_{i,j},\\
\overline{_i\pa}\,{_j\e}&=r^{-\lan j,i\ran}s^{\lan i,j\ran}{_j\e}\,\overline{_i\pa}+\de_{i,j}.
\end{align*}
\end{lem}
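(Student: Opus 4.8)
The plan is to prove the four identities in two rounds. First I would establish the two Leibniz-type rules (lines one and two) by unravelling the definitions of $\overline{\pa_i}$ and $\overline{_i\pa}$ against the corresponding rules in Lemma~\ref{sk}. Then I would deduce the two straightening relations (lines three and four) by feeding $e_j$ into one slot of the Leibniz rules just obtained.

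For the first line, take $x\in(U_{r,s}^+)_\be$ and $x'\in(U_{r,s}^+)_{\be'}$, so that $xx'\in(U_{r,s}^+)_{\be+\be'}$. On the homogeneous component of degree $\ga$ the definition reads $\overline{\pa_i}=r^{-\lan\al_i,\ga-\al_i\ran}s^{\lan\ga-\al_i,\al_i\ran}\,{_i\pa}$. I would apply this with $\ga=\be+\be'$, expand ${_i\pa}(xx')$ by Lemma~\ref{sk}(ii), and then substitute back ${_i\pa}(x)=r^{\lan\al_i,\be-\al_i\ran}s^{-\lan\be-\al_i,\al_i\ran}\overline{\pa_i}(x)$ together with the analogous expression for ${_i\pa}(x')$. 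Using bilinearity of $\lan-,-\ran$, all weight-dependent scalars telescope: the coefficient of $\overline{\pa_i}(x)\,x'$ reduces to $r^{-\lan\al_i,\be'\ran}s^{\lan\be',\al_i\ran}$ and the coefficient of $x\,\overline{\pa_i}(x')$ reduces to $1$, which is exactly the asserted formula. The second line is obtained verbatim, with ${_i\pa}$ replaced by $\pa_i$, the definition of $\overline{_i\pa}$ in place of that of $\overline{\pa_i}$, and Lemma~\ref{sk}(i) in place of Lemma~\ref{sk}(ii).

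For the last two lines, I would first record that ${_i\pa}(e_j)=\pa_i(e_j)=\de_{i,j}$; this follows by evaluating the two identities of the preceding Corollary at $1\in(U_{r,s}^+)_0$, since $\e_j(1)={_j\e}(1)=e_j$ while $\pa_i(1)={_i\pa}(1)=0$. Consequently $\overline{\pa_i}(e_j)=\overline{_i\pa}(e_j)=\de_{i,j}$, because the scalar prefactors in the definitions of the bar-operators involve the weight $\al_j-\al_i$, which vanishes precisely when $i=j$. Applying the first line with $x'=e_j$, $\be'=\al_j$ and recalling $\lan\al_i,\al_j\ran=\lan i,j\ran$, $\lan\al_j,\al_i\ran=\lan j,i\ran$, gives $\overline{\pa_i}(xe_j)=r^{-\lan i,j\ran}s^{\lan j,i\ran}\,\e_j(\overline{\pa_i}(x))+\de_{i,j}\,x$, i.e. the third line; feeding $e_j$ into the first slot of the second line gives the fourth line in the same fashion.

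I do not expect any serious difficulty: the whole argument is a bookkeeping computation with the Euler form. The only points that need care are that $\overline{\pa_i}$ is a rescaling of ${_i\pa}$ while $\overline{_i\pa}$ is a rescaling of $\pa_i$, so the left/right roles are interchanged and the first bar-identity must be matched with Lemma~\ref{sk}(ii) and the second with Lemma~\ref{sk}(i); and the verification that the weight-dependent powers of $r$ and $s$ really cancel, which is where bilinearity of $\lan-,-\ran$ is invoked repeatedly.
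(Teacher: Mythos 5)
Your proposal is correct and is precisely the routine verification the paper intends: the paper states this lemma without proof, and the argument is exactly your unwinding of the definitions of $\overline{\pa_i}$ and $\overline{_i\pa}$ against Lemma \ref{sk} (with the left/right swap you correctly note, $\overline{\pa_i}$ rescaling ${_i\pa}$ and $\overline{_i\pa}$ rescaling $\pa_i$) together with the corollary evaluated at $1$ to get $\overline{\pa_i}(e_j)=\overline{_i\pa}(e_j)=\de_{i,j}$. The scalar cancellations you describe all check out, so nothing further is needed.
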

\begin{pro} For any $i\neq j$, we have
\begin{align}
&\sum_{k=0}^{1-a_{ij}} (-1)^{k}
\binom{1-a_{ij}}{k}_{r_is_i^{-1}} c^{(k)}_{ij}\,\overline{\pa_i}^{k}\,\overline{\pa_j}\,\overline{\pa_i}^{1-a_{ij}-k}=0,\label{dsk1}\\
&\sum_{k=0}^{1-a_{ij}} (-1)^{k} \binom{1-a_{ij}}{k}_{r_is_i^{-1}}
c^{(k)}_{ij}\,\overline{_i\pa}^{1-a_{ij}-k}\,\overline{_j\pa}\,\overline{_i\pa}^{k}=0,\label{dsk2}
\end{align}
where
$$
c^{(k)}_{ij}=(r_is_i^{-1})^{\frac{k(k-1)}{2}} r^{k\lan
j,i\ran}s^{-k\lan i,j\ran},\qquad\textit{for } \ i\neq j.
$$
\end{pro}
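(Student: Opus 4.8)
The plan is to deduce the barred Serre identities \eqref{dsk1}--\eqref{dsk2} from the identities \eqref{D1}--\eqref{D2} of Proposition~\ref{ds} by transporting them through the weight-rescaling that defines $\overline{\pa_i}$ and $\overline{_i\pa}$. Put $N:=1-a_{ij}$. Directly from the definitions one has, for $x\in(U_{r,s}^+)_\be$,
\begin{equation*}
\overline{\pa_i}(x)=\xi_i(\be)\,{_i\pa}(x),\qquad
\overline{_i\pa}(x)=\eta_i(\be)\,\pa_i(x),
\end{equation*}
where $\xi_i(\be)=r^{-\lan\al_i,\be-\al_i\ran}s^{\lan\be-\al_i,\al_i\ran}$ and $\eta_i(\be)=r^{-\lan\be-\al_i,\al_i\ran}s^{\lan\al_i,\be-\al_i\ran}$ depend only on $i$ and on the source weight. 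Since these skew derivations lower the weight by $\al_i$, iterating gives, for $x\in(U_{r,s}^+)_\be$ and $0\le k\le N$,
\begin{equation*}
\overline{\pa_i}^{\,k}\,\overline{\pa_j}\,\overline{\pa_i}^{\,N-k}(x)=\Lambda_k(\be)\,{_i\pa}^{\,k}\,{_j\pa}\,{_i\pa}^{\,N-k}(x),
\end{equation*}
with $\Lambda_k(\be)$ the product of the $N+1$ scalars $\xi$ taken at the weights traversed along the word. Collecting exponents (using $\lan\al_i,\al_i\ran=d_i$ and bilinearity of $\lan\,,\,\ran$) shows $\Lambda_k(\be)$ is an explicit monomial in $r,s$ whose only dependence on $k$, once $k+(N-k)=N$ is used, is a factor $(rs)^{\pm k\,d_ia_{ij}}$ coming from carrying the single $\al_j$ across the block of $\al_i$'s (the sign depending on whether $i<j$ or $i>j$).

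First I would prove \eqref{dsk1}. Solving the last display for ${_i\pa}^{\,N-k}\,{_j\pa}\,{_i\pa}^{\,k}(x)$, substituting into \eqref{D1}, and reindexing $k\mapsto N-k$ (using $\binom{N}{N-k}_{v}=\binom{N}{k}_{v}$ and $(-1)^{N-k}=(-1)^{N}(-1)^{k}$) yields
\begin{equation*}
(-1)^{N}\sum_{k=0}^{N}(-1)^{k}\binom{N}{k}_{r_is_i^{-1}}\frac{c^{(N-k)}_{ij}}{\Lambda_{k}(\be)}\,\overline{\pa_i}^{\,k}\,\overline{\pa_j}\,\overline{\pa_i}^{\,N-k}(x)=0 .
\end{equation*}
It then suffices to verify that $c^{(N-k)}_{ij}/\Lambda_{k}(\be)$ equals $c^{(k)}_{ij}$ times a scalar independent of $k$. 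Using $c^{(k)}_{ij}=(r_is_i^{-1})^{k(k-1)/2}r^{k\lan j,i\ran}s^{-k\lan i,j\ran}$ one computes $c^{(N-k)}_{ij}/c^{(k)}_{ij}$ and then $c^{(N-k)}_{ij}/\bigl(c^{(k)}_{ij}\Lambda_k(\be)\bigr)$; its $k$-dependence collapses to the factor $(r_is_i^{-1})^{-k(N-1+a_{ij})}$, which is $1$ because $N=1-a_{ij}$. (Here one also uses $d_ia_{ij}=d_ja_{ji}$, and one treats the cases $i<j$ and $i>j$ separately since $\lan\,,\,\ran$ is not symmetric, the two being identical in structure.) Hence $c^{(N-k)}_{ij}/\Lambda_k(\be)=\mu(\be)\,c^{(k)}_{ij}$ for a nonzero scalar $\mu(\be)$, and dividing the displayed identity by $(-1)^N\mu(\be)$ gives \eqref{dsk1} on $(U_{r,s}^+)_\be$; as $\be\in Q^+$ is arbitrary, \eqref{dsk1} holds on all of $U_{r,s}^+$. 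The proof of \eqref{dsk2} from \eqref{D2} is word-for-word the same, with $\overline{\pa_i},\xi_i$ replaced by $\overline{_i\pa},\eta_i$ and the positions of the exponents exchanged accordingly.

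The only genuinely laborious step is the bookkeeping of the rescaling monomial $\Lambda_k(\be)$ together with the check that, after reindexing, its $k$-dependent part cancels that of $c^{(N-k)}_{ij}/c^{(k)}_{ij}$, leaving a single weight-dependent scalar times $(r_is_i^{-1})^{N(N-1)/2}$; this cancellation is exactly the shadow of $N-1+a_{ij}=0$ and is what makes the ``flipped-ordering'' relation \eqref{dsk1} equivalent to \eqref{D1} after rescaling. (Alternatively one can avoid Proposition~\ref{ds} entirely: by the Lemma preceding the statement the barred operators obey twisted Leibniz rules and twisted commutation relations with the right multiplications $\e_j$ of exactly the same shape as $\pa_i,{_i\pa}$ do, so the same induction on weight --- using the Gaussian-binomial recursion and the defining relation (R6) --- proves \eqref{dsk1}--\eqref{dsk2} directly.)
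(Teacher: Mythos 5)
Your proposal is correct and follows the same route as the paper, whose proof is exactly ``by the definition of $\overline{\pa_i},\ \overline{_i\pa}$ and Proposition \ref{ds}'': transport \eqref{D1}--\eqref{D2} through the weight-dependent rescaling and reindex $k\mapsto 1-a_{ij}-k$. Your explicit bookkeeping checks out --- the $k$-dependent part of $\Lambda_k(\be)$ is $(rs)^{k(\lan i,j\ran-\lan j,i\ran)}$ and, using $\lan i,j\ran+\lan j,i\ran=d_ia_{ij}$, the residual $k$-dependence of $c^{(N-k)}_{ij}/(c^{(k)}_{ij}\Lambda_k(\be))$ is indeed $(r_is_i^{-1})^{-k(N-1+a_{ij})}=1$ --- so you have simply supplied the scalar computation the paper leaves implicit.
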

\begin{proof}
By the definition of $\overline{\pa_i},\  \overline{_i\pa}$ and Proposition \ref{ds}.
\end{proof}

\subsection{Module algebra structure}
For any $\mu\in Q,\; i\in I,\; x\in (U_{r,s}^+)_{\be}$, we define
the action $\triangleright$ by :
\begin{align*}
&\om_{\mu}\triangleright x=r^{\lan\be,\mu\ran}s^{-\lan\mu,\be\ran}x,\\
&\om_{\mu}'\triangleright x=r^{-\lan\mu,\be\ran}s^{\lan\be ,\mu\ran}x,\\
&e_i\triangleright x=\frac{e_ix-r^{\lan\be,\al_i\ran}s^{-\lan\al_i,\be\ran}xe_i}{r_i-s_i},\\
&f_i\triangleright x=\overline{\pa_i}(x).
\end{align*}

\begin{theo}
$U_{r,s}^+$ is a $U_{r,s}(\mg)$-module algebra.
\end{theo}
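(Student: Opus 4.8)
The plan is to verify two things: that the assignment $h\mapsto(h\triangleright-)$ respects the defining relations $(R1)$--$(R7)$, so that $\triangleright$ really is a left $U_{r,s}(\mg)$-module structure on $U_{r,s}^+$, and that the two module-algebra axioms $h\triangleright 1=\vep(h)1$ and $h\triangleright(xx')=\sum(h_{(1)}\triangleright x)(h_{(2)}\triangleright x')$ hold. The first axiom is immediate on generators: $\om_\mu\triangleright 1=\om_\mu'\triangleright 1=1$ for $\mu\in Q$, and $e_i\triangleright 1=f_i\triangleright 1=\overline{\pa_i}(1)=0$ since $1\in(U_{r,s}^+)_0$. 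The observation that settles most of the relations simultaneously is that, on the generators $e_i,\om_i^{\pm1},\om_i'^{\pm1}$, the operator $h\triangleright(-)$ coincides --- up to the nonzero scalar $(r_i-s_i)^{-1}$ in the case of $e_i$ --- with the restriction to $U_{r,s}^+$ of the left adjoint action $\ad(h)(u)=\sum h_{(1)}\,u\,S(h_{(2)})$ of $U_{r,s}$ on itself; indeed $\ad(e_i)(x)=e_ix-\om_ix\om_i^{-1}e_i=e_ix-r^{\lan\be,\al_i\ran}s^{-\lan\al_i,\be\ran}xe_i=(r_i-s_i)(e_i\triangleright x)$ for $x\in(U_{r,s}^+)_\be$, while $\ad(\om_\mu)(x)=\om_\mu x\om_{-\mu}=\om_\mu\triangleright x$ and $\ad(\om_\mu')(x)=\om_\mu'x\om_{-\mu}'=\om_\mu'\triangleright x$ by the $Q^+$-grading relations. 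Since $\ad$ is an algebra homomorphism $U_{r,s}\to\End(U_{r,s})$ preserving the subspace $U_{r,s}^+$, and since the left adjoint action makes any Hopf algebra into a module algebra over itself, relations $(R1)$, $(R2)$, $(R3)$, $(R6)$ and the module-algebra identity for $h\in\{e_i,\om_i^{\pm1},\om_i'^{\pm1}\}$ follow at once --- the scalar prefactor for $e_i$ occurs to the same total power in every summand of $(R3)$ and $(R6)$ and hence factors out, while $e_i\triangleright(xx')=(e_i\triangleright x)x'+(\om_i\triangleright x)(e_i\triangleright x')$ is read off directly from $\De(e_i)=e_i\ot1+\om_i\ot e_i$ and the grading.

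What remains are the relations involving $f_i$, where $f_i\triangleright=\overline{\pa_i}$. Relation $(R4)$ reduces to a weight count: $\overline{\pa_j}$ maps $(U_{r,s}^+)_\be$ into $(U_{r,s}^+)_{\be-\al_j}$, so comparing $(\om_i f_j)\triangleright x$ with $r^{-\lan j,i\ran}s^{\lan i,j\ran}(f_j\om_i)\triangleright x$ (and likewise for $\om_i'$) comes down to $\lan\al_j,\al_i\ran=\lan j,i\ran$ and $\lan\al_i,\al_j\ran=\lan i,j\ran$. Relation $(R7)$ for the operators $f_i\triangleright=\overline{\pa_i}$ asks precisely that $\sum_k(-1)^k\binom{1-a_{ij}}{k}_{r_is_i^{-1}}c^{(k)}_{ij}\,\overline{\pa_i}^{\,k}\,\overline{\pa_j}\,\overline{\pa_i}^{\,1-a_{ij}-k}=0$, which is exactly the identity \eqref{dsk1} established in the Proposition above. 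Finally, the module-algebra identity for $h=f_i$, using $\De(f_i)=1\ot f_i+f_i\ot\om_i'$ together with $\om_i'\triangleright x'=r^{-\lan\al_i,\be'\ran}s^{\lan\be',\al_i\ran}x'$ for $x'\in(U_{r,s}^+)_{\be'}$, is nothing but the twisted Leibniz rule $\overline{\pa_i}(xx')=r^{-\lan\al_i,\be'\ran}s^{\lan\be',\al_i\ran}\overline{\pa_i}(x)x'+x\,\overline{\pa_i}(x')$ recorded in the preceding Lemma.

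The one genuinely substantive step, which I expect to be the main obstacle, is relation $(R5)$: it is the only relation coupling the adjoint-type action of $e_i$ with the skew-derivation action of $f_i$, and hence the one that tests the exact normalization built into $\overline{\pa_i}$. Here I would expand $e_i\triangleright\overline{\pa_j}(x)$ and $\overline{\pa_j}(e_i\triangleright x)$ for $x\in(U_{r,s}^+)_\be$ by applying the twisted Leibniz rule to $\overline{\pa_j}(e_ix)$ and $\overline{\pa_j}(xe_i)$, together with $\overline{\pa_j}(e_i)=\de_{ij}$ (the Lemma above applied to $1$). In the difference, the terms carrying $e_i\,\overline{\pa_j}(x)$ cancel; the terms carrying $\overline{\pa_j}(x)\,e_i$ cancel because $\lan\be-\al_j,\al_i\ran=\lan\be,\al_i\ran-\lan\al_j,\al_i\ran$ and $\lan\al_i,\be-\al_j\ran=\lan\al_i,\be\ran-\lan\al_i,\al_j\ran$; and the surviving contribution is $\de_{ij}(r_i-s_i)^{-1}\bigl(r^{\lan\be,\al_i\ran}s^{-\lan\al_i,\be\ran}-r^{-\lan\al_i,\be\ran}s^{\lan\be,\al_i\ran}\bigr)x$, which is precisely $\de_{ij}\frac{\om_i-\om_i'}{r_i-s_i}\triangleright x$. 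This bookkeeping is short once the twisted Leibniz rules of the previous subsection are in hand; together with the verifications above, it yields the theorem.
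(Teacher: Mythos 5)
Your proposal is correct and follows essentially the same route as the paper: the substantive steps — relation (R5) via the twisted Leibniz rule for $\overline{\pa_j}$ together with $\overline{\pa_j}(e_i)=\de_{ij}$, relation (R7) via the identity \eqref{dsk1}, and the module-algebra identities read off from the Leibniz rules — are exactly the paper's computations. Your framing of the relations among $e_i,\om_i^{\pm1},\om_i'^{\pm1}$ through the rescaled adjoint action (valid since only these generators, whose adjoint operators preserve $U_{r,s}^+$, are involved) is just a tidy repackaging of the part the paper treats as immediate, so the substance of the argument is the same.
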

\begin{proof}
First, we show that $U_{r,s}^+$ is a $U_{r,s}(\mg)$-module. In fact,
for $x\in (U_{r,s}^+)_\beta$,
\begin{align*}
(r_i-s_i)&(e_i\triangleright f_j\triangleright-f_j\triangleright e_i\triangleright)(x)\\
=\ &(r_i-s_i)e_i\triangleright(\overline{\pa_j}(x))-f_j\triangleright(e_ix-r^{\lan\be,\al_i\ran}s^{-\lan\al_i,\be\ran}xe_i)\\
\end{align*}
\begin{align*}
=\
&e_i\overline{\pa_j}(x)-r^{\lan\be-\al_j,\al_i\ran}s^{-\lan\al_i,\be-\al_j\ran}\overline{\pa_j}(x)e_i-
\overline{\pa_j}(e_ix)+r^{\lan\be,\al_i\ran}s^{-\lan\al_i,\be\ran}\overline{\pa_j}(xe_i)\\
=\
&e_i\overline{\pa_j}(x)-r^{\lan\be-\al_j,\al_i\ran}s^{-\lan\al_i,\be-\al_j\ran}\overline{\pa_j}(x)e_i-
(\de_{i,j}r^{-\lan\al_j,\be\ran}s^{\lan\be,\al_j\ran}x
+e_i\overline{\pa_j}(x))\\
&+r^{\lan\be,\al_i\ran}s^{-\lan\al_i,\be\ran}(\de_{i,j}x+r^{-\lan j,i\ran}s^{\lan i,j\ran}\overline{\pa_j}(x)e_i)\\
=\ &\de_{i,j}(r^{\lan\be,\al_i\ran}s^{-\lan\al_i,\be\ran}-r^{-\lan\al_j,\be\ran}s^{\lan\be,\al_j\ran})(x)\\
=\ &\de_{i,j}(\om_i-\om_j')\triangleright x.
\end{align*}
It is clear that $e_i$ satisfies $(r,s)$-Serre relation (R6). By (\ref{dsk1}),
$f_i$ satisfies $(r,s)$-Serre relation (R7).

Next, we show that $U_{r,s}^+$ is a $U_{r,s}(\mg)$-module algebra.
For any $x\in(U_{r,s}^+)_{\be},x'\in(U_{r,s}^+)_{\be'}$,
\begin{gather*}
\om_{\mu}\triangleright(xx')=r^{\lan\be+\be',\mu\ran}s^{-\lan\mu,\be+\be'\ran}xx'
=(\om_{\mu}\triangleright
x)(\om_{\mu}\triangleright x'),
\\
\om_{\mu}'\triangleright(xx')=r^{-\lan\mu,\be+\be'\ran}s^{\lan\be+\be',\mu\ran}xx'
=(\om_{\mu}'\triangleright
x)(\om_{\mu}'\triangleright x'),
\end{gather*}
\begin{align*}
(r_i-s_i)&e_i\triangleright(xx')\\
=&\ e_ixx'-r^{\lan\be+\be',\al_i\ran}s^{-\lan\al_i,\be+\be'\ran}xx'e_i\\
=&\ (e_ix-r^{\lan\be,\al_i\ran}s^{-\lan\al_i,\be\ran}xe_i)x'
+r^{\lan\be,\al_i\ran}s^{-\lan\al_i,\be\ran}x(e_ix'-r^{\lan\be',\al_i\ran}s^{-\lan\al_i,\be'\ran}x'e_i)\\
=&\ (r_i-s_i)(e_i\triangleright x)(1\triangleright x')+(r_i-s_i)(\om_{i}\triangleright x)(e_i\triangleright x'),
\end{align*}
\begin{align*}
f_i\triangleright(xx')=&\,\overline{\pa_i}(xx')
= r^{-\lan\al_i,\be'\ran}s^{\lan\be'\al_i\ran}\overline{\pa_i}(x)\,x'+x\,\overline{\pa_i}(x')\\
=&\, (f_i\triangleright x)(\om_i'\triangleright x')+(1\triangleright
x)(f_i\triangleright x').
\end{align*}

This completes the proof.
\end{proof}

\section{\bf Hopf $2$-cocycle deformation}

\begin{defi}\label{HC}
Let $(H,m,1,\De,\vep,S)$ be a Hopf algebra over a field $k$. A
bilinear form $\si: \; H\times H \longrightarrow k$ is called a Hopf
$2$-cocycle for $H$ if it has an inverse $\si^{-1}$ under the
convolution product, and satisfies the cocycle conditions:
\begin{gather}
\sum\si(a_1, b_1)\si(a_2b_2, c) =\sum\si(b_1, c_1)\si(a, b_2c_2), \label{cocy1}\\
\si(a, 1) =\vep(a) = \si(1, a), \qquad\text{$\forall\; a, b, c\in
H$}.\label{cocy2}
\end{gather}

\end{defi}
Following Doi-Takeuchi \cite{DT} and Majid \cite{Ma}, one can construct a new Hopf
algebra structure $(H^{\si}$, $m^{\si}, 1,\Delta, \vep, S^{\si})$ on
the coalgebra $(H,\Delta,\vep)$. The new multiplication $m^{\si}$ is
given by
\begin{eqnarray}
m^{\si}(a\ot b)=\sum\si(a_1, b_1)a_2b_2\si^{-1}(a_3, b_3).
\end{eqnarray}
The new antipode $S^{\si}$ is given by
\begin{eqnarray}
S^{\si}(a)=\sum
\si^{-1}(a_1,S(a_2))S(a_3)\si(S(a_4),a_5),\quad\forall\; a\in H.
\end{eqnarray}

\begin{pro}
Assume that $rs^{-1}=q^2$. Let $\si:U_{q}(\mg)\times U_{q}(\mg)\lra
\bK$ be a bilinear map defined as
\begin{equation}
\si(x,y)=\begin{cases}
r^{\frac{\lan\nu,\mu\ran}{2}}s^{-\frac{\lan\mu,\nu\ran}{2}},\qquad\forall\ \
x=K_{\mu},\ \text{or}\ K_{\mu}',\ \  y=K_{\nu},\
\text{or}\ K_{\nu}',\ \mu,\nu\in Q,\\
0,\qquad\qquad\qquad\quad\text{otherwise}.
\end{cases}\label{cocy3}
\end{equation}
Then $\si$ is a Hopf $2$-cocycle on $U_{q}$ in the sense of
Definition \ref{HC}.
\end{pro}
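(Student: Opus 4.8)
The plan is to verify directly that the bilinear form $\si$ defined by \eqref{cocy3} satisfies the two conditions in Definition \ref{HC}, namely that it is convolution-invertible and that it satisfies \eqref{cocy1}--\eqref{cocy2}. The key observation is that $\si$ is supported on the group-like part $U_q^0$: it vanishes unless both arguments are among the generators $K_\mu$ or $K_\mu'$, and on such arguments it depends only on the subscripts $\mu,\nu\in Q$ through the exponential expression $r^{\langle\nu,\mu\rangle/2}s^{-\langle\mu,\nu\rangle/2}$. Since $K_\mu$ and $K_\mu'$ are group-like (so $\Delta(K_\mu)=K_\mu\ot K_\mu$, $\vep(K_\mu)=1$, $S(K_\mu)=K_{-\mu}$, and likewise for the primed elements), all the Sweedler-notation sums in \eqref{cocy1} collapse to single terms when evaluated on group-likes, and vanish otherwise because any nontrivial component of a coproduct of an element outside $U_q^0$ involves some $E_i$ or $F_i$, on which $\si$ is zero.

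First I would record the bicharacter property: for $\mu,\nu\in Q$ write $\phi(\mu,\nu):=r^{\langle\nu,\mu\rangle/2}s^{-\langle\mu,\nu\rangle/2}$, and note that bilinearity of $\langle-,-\rangle$ on $Q\times Q$ gives $\phi(\mu+\mu',\nu)=\phi(\mu,\nu)\phi(\mu',\nu)$ and $\phi(\mu,\nu+\nu')=\phi(\mu,\nu)\phi(\mu,\nu')$, together with $\phi(\mu,0)=\phi(0,\nu)=1$. This immediately yields \eqref{cocy2}, since $1=K_0=K_0'$ and $\si(a,1)=\vep(a)=\si(1,a)$ holds trivially on group-likes and holds on the other generators because both sides are $0$ (for $a=E_i,F_i$ one has $\vep(a)=0$). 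Next I would check convolution-invertibility: define $\si^{-1}$ on group-likes by $\si^{-1}(K_\mu,K_\nu)=\phi(\mu,\nu)^{-1}=\phi(-\mu,\nu)$ (and analogously with primes), and $0$ otherwise; then $\sum\si(a_1,b_1)\si^{-1}(a_2,b_2)=\vep(a)\vep(b)=\sum\si^{-1}(a_1,b_1)\si(a_2,b_2)$ is verified on group-likes using the bicharacter identity $\phi(\mu,\nu)\phi(-\mu,\nu)=1$, and on other generators both sides vanish.

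For the cocycle identity \eqref{cocy1} I would argue that it suffices, by multiplicativity and the fact that $U_q$ is generated as an algebra by the $E_i,F_i,K_i^{\pm1},K_i'^{\pm1}$, to test it on generators (more precisely: both sides of \eqref{cocy1}, viewed as functions of $a,b,c$, are "$\si$-weighted" and vanish as soon as any argument lies outside $U_q^0$, since a coproduct of $E_i$ or $F_i$ always places an $E_i$ or $F_i$ in one tensor slot, killing every $\si$-factor). Hence the only case with a possibly nonzero contribution is $a=K_\mu$ (or $K_\mu'$), $b=K_\nu$ (or $K_\nu'$), $c=K_\lambda$ (or $K_\lambda'$), where both sides reduce to the single scalar identity
\begin{equation*}
\phi(\mu,\nu)\,\phi(\mu+\nu,\lambda)=\phi(\nu,\lambda)\,\phi(\mu,\nu+\lambda),
\end{equation*}
which is exactly the bicharacter property expanded out. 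I expect the main (really the only) subtlety to be the bookkeeping claim that $\si$ and the two sides of \eqref{cocy1} genuinely vanish whenever an argument is not group-like — this uses the triangular/$Q$-graded structure of $\Delta$ on $U_q$ (that $\Delta(x)$ for $x$ of nonzero $Q$-degree lies in a sum of $U_q^{\ge0}\ot U_q$ pieces each with at least one genuine $E$- or $F$-factor), so one should phrase the reduction to generators carefully; once that is in place, the remaining computation is the one-line scalar identity above.
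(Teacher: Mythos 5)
Your proof is correct and takes essentially the same route as the paper's: you reduce the cocycle identity to the toral part, where both sides collapse to the scalar bicharacter identity $\phi(\mu,\nu)\,\phi(\mu+\nu,\lambda)=\phi(\nu,\lambda)\,\phi(\mu,\nu+\lambda)$ coming from bilinearity of the Euler form, and you dispose of the remaining cases by noting that every summand of the coproduct of a non-toral element keeps a factor outside $U_{q}^0$, which kills the corresponding $\si$-factors -- exactly the paper's case analysis. The only addition is that you exhibit the convolution inverse $\si^{-1}$ explicitly, a point the paper leaves implicit.
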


\begin{proof} It is clear that $\si$ satisfies the condition (\ref{cocy2}).
For any homogeneous elements $x,\, y,\, z\in U_{q}$, if $x,\, y,\,
z\in U_{q}^0$, it is clear that $\si$ satisfies the condition
(\ref{cocy1}). If $x\notin U_{q}^0$, then $\sum\si(y_1, z_1)\si(x,
y_2z_2)=0$. On the other hand, let $a\ot b$ be any one of the
summands in $\De(x)$, then $a\ot b\notin U_{q}^0\ot U_{q}^0$, that
is, $a\notin U_{q}^0$ or $b\notin U_{q}^0$. It follows that
$\sum\si(a, y_1)\si(by_2, z)=0$. Therefore, $\si$ also satisfies the
condition (\ref{cocy1}). Similarly, we can prove that $\si$ also
satisfies the condition (\ref{cocy1}) if $y\notin U_{q}^0$ or
$z\notin U_{q}^0$.
\end{proof}

\begin{theo}\label{Thm1}
Let $rs^{-1}=q^2$ and $\si$ be defined as (\ref{cocy3}). Then, as
$\bK$-Hopf algebras,
$$
U_{r,s}(\mg)\simeq U_{q}^{\si}(\mg),
$$ where $U_{q}^{\si}$ is the
new Hopf algebra arising from the deformation by Hopf $2$-cocycle
$\si$.
\end{theo}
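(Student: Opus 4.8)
The plan is to write down an explicit isomorphism of Hopf algebras $\phi\colon U_{r,s}(\mg)\lra U_q^{\si}(\mg)$ sending the Chevalley generators of $U_{r,s}(\mg)$ to suitably rescaled Chevalley generators of $U_q(\mg)$, and to verify that it respects the defining relations $(R1)$--$(R7)$, where on the target one uses the twisted multiplication $m^{\si}(a\ot b)=\sum\si(a_1,b_1)\,a_2b_2\,\si^{-1}(a_3,b_3)$. Because a Hopf $2$-cocycle twist does not alter the coalgebra structure, compatibility of $\phi$ with $\De$, $\vep$ and $S$ will essentially come for free, so the substance of the argument is the check of the algebra relations.

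The first step is to compute $m^{\si}$ on generators. The key point is that $\si$, hence also its convolution inverse $\si^{-1}$, vanishes as soon as one argument lies outside $U_q^0$; moreover $U_q^0$ is linearly spanned by the group-likes $K_\mu K_\nu'$, while $\De^{(2)}(E_i)$, $\De^{(2)}(F_i)$, $\De^{(2)}(K_i^{\pm1})$, $\De^{(2)}({K_i'}^{\pm1})$ only involve these very generators together with $1$. Therefore, for any pair of generators, $m^{\si}$ collapses to a single surviving summand, and a short bookkeeping gives: the group-likes multiply as before; $m^{\si}(K_i\ot E_j)=\si(K_i,K_j)\,K_iE_j$ together with its three analogues, which after inserting $q^2=rs^{-1}$ and $\lan i,j\ran+\lan j,i\ran=(\al_i,\al_j)$ reproduce exactly $(R3)$--$(R4)$; the perhaps unexpected identities $m^{\si}(E_i\ot F_j)=E_iF_j$ and $m^{\si}(F_j\ot E_i)=F_jE_i$, so that $[E_i,F_j]_{\si}=[E_i,F_j]=\de_{i,j}(K_i-K_i')/(q_i-q_i^{-1})$; and $m^{\si}(E_i\ot E_i)=\si(K_i,K_i)\,E_i^2=q_iE_i^2$, with corresponding powers of $q_i$ accumulating for longer monomials in the $E_i$ (and likewise in the $F_i$). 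Here one takes $\si^{-1}(K_\mu K_\nu',K_\rho K_\la')=\si(K_\mu K_\nu',K_\rho K_\la')^{-1}$, extended by zero, and checks on generators that this is indeed the convolution inverse.

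Next, define $\phi$ by $\phi(\om_i^{\pm1})=K_i^{\pm1}$, $\phi({\om_i'}^{\pm1})={K_i'}^{\pm1}$, $\phi(e_i)=E_i$ and $\phi(f_i)=q_i^{-1}s_i^{-1}F_i$. From $q_i^2=r_is_i^{-1}$ one gets $q_i-q_i^{-1}=q_i^{-1}s_i^{-1}(r_i-s_i)$, so the rescaling of $f_i$ is precisely what turns $[E_i,F_i]_{\si}$ into $(K_i-K_i')/(r_i-s_i)$, giving $(R5)$; $(R1)$, $(R2)$ are immediate, and $(R3)$, $(R4)$ were verified above (the rescalings cancel there). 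The hard part will be the two $(r,s)$-Serre relations $(R6)$, $(R7)$: one must evaluate $m^{\si}$ on the words $E_i^{1-a_{ij}-k}E_jE_i^{k}$, extract the accumulated monomials in $q_i$, and then reconcile the coefficients $\bi{1-a_{ij}}{k}_{r_is_i^{-1}}c^{(k)}_{ij}$ with those of the ordinary quantum Serre relation. Using $r_is_i^{-1}=q_i^2$ together with the identity $\left[n\atop k\right]_{q_i}=q_i^{-k(n-k)}\bi{n}{k}_{q_i^{2}}$, the twisted $(r,s)$-Serre expression should come out as a nonzero scalar multiple of $\sum_k(-1)^k\left[{1-a_{ij}}\atop{k}\right]_{q_i}E_i^{1-a_{ij}-k}E_jE_i^{k}$, which is $0$ in $U_q$, and $(R7)$ is symmetric. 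A cleaner route is to observe that $U_q^{\geq0}$ and $U_q^{\leq0}$ are sub-Hopf algebras on which $\si$ restricts to a Hopf $2$-cocycle, so that $(R6)$ and $(R7)$ reduce to the assertion that the positive and negative parts of $U_{r,s}(\mg)$ are precisely these cocycle twists, which is Theorem 3.3 of \cite{HP1}.

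Finally, $\phi$ is automatically a coalgebra morphism, since $U_q^{\si}$ carries the coalgebra of $U_q$ and the formulas $\De(e_i)=e_i\ot1+\om_i\ot e_i$, $\De(f_i)=1\ot f_i+f_i\ot\om_i'$, $\De(\om_i^{\pm1})=\om_i^{\pm1}\ot\om_i^{\pm1}$, $\De({\om_i'}^{\pm1})={\om_i'}^{\pm1}\ot{\om_i'}^{\pm1}$ map under $\phi\ot\phi$ exactly to the corresponding coproduct formulas for $U_q$ (the scalar on $f_i$ being compatible with its coproduct); hence $\phi$ is a morphism of bialgebras, and it then automatically intertwines the antipodes. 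Surjectivity is clear, since every generator of $U_q^{\si}$ lies in the image up to a nonzero scalar; for injectivity one compares the triangular decompositions $U_{r,s}=U_{r,s}^-\ot U_{r,s}^0\ot U_{r,s}^+$ and $U_q=U_q^-\ot U_q^0\ot U_q^+$, which have equal $Q$-graded dimensions in each degree, so the graded surjection $\phi$ must be bijective -- equivalently, twisting $U_q^{\si}$ back by $\si^{-1}$ recovers $U_q$ and lets one write down the inverse map directly on generators. This would complete the proof.
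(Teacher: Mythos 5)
Your proposal is essentially the paper's own proof: the same assignment $\phi(e_i)=E_i$, $\phi(f_i)=(s_iq_i)^{-1}F_i$, $\phi(\om_i)=K_i$, $\phi(\om_i')=K_i'$, the same observation that only the toral terms of $\De^{(2)}$ survive in $m^{\si}$ (so the group-likes multiply as before, $E_i*F_j-F_j*E_i=E_iF_j-F_jE_i$, and (R3)--(R5) follow from $\si(K_i,K_j)$-bookkeeping with $q^2=rs^{-1}$), and the same remark that the coalgebra is untouched, so compatibility with $\De$, $\vep$, $S$ is automatic once $\phi$ is an algebra map.

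The one place you diverge is the $(r,s)$-Serre relations, and that is exactly the substantial part of the paper's proof: it evaluates $E_i^{*(1-a_{ij}-k)}*E_j*E_i^{*k}$ and $F_i^{*k}*F_j*F_i^{*(1-a_{ij}-k)}$ explicitly, collects the accumulated $\si$-factors, and uses $\bi{1-a_{ij}}{k}_{q_i^2}\,q_i^{k(k-1+a_{ij})}=\left[{1-a_{ij}}\atop k\right]_{q_i}$ (the identity you quote, in equivalent form) to reduce to the one-parameter Serre relations. Your ``cleaner route'' via Theorem 3.3 of \cite{HP1} is not yet a complete argument as stated: to invoke it you must first prove that the Hopf $2$-cocycle twist, restricted to the homogeneous components of $U_q^{+}$ (resp.\ $U_q^{-}$), collapses to a graded bicharacter twist, i.e.\ $m^{\si}(x\ot y)=\si(K_\al,K_\be)\,xy$ for $x\in(U_q^{+})_{\al}$, $y\in(U_q^{+})_{\be}$ (only the extreme terms of $\De^{(2)}$ contribute), and then reconcile the bicharacter $r^{\frac{\lan\be,\al\ran}{2}}s^{-\frac{\lan\al,\be\ran}{2}}$ with the cocycle actually used in \cite{HP1} (equivalently the $\zeta$ of Section 5): they differ by the symmetric factor $q^{-\frac{(\al,\be)}{2}}$, which is a group coboundary, and this discrepancy is harmless only because the Serre relations are homogeneous in $Q^{+}$ -- a point that has to be said. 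Once these bridging steps are written out, the computation involved is essentially the one the paper performs directly, so the shortcut saves little; your first, direct route (sketched with the correct scalars and the correct $q$-binomial identity) is precisely the paper's argument, and filling it in is the recommended fix. Your brief treatment of bijectivity (untwisting by $\si^{-1}$, or comparing triangular decompositions) is fine and is in fact more explicit than the paper, which leaves that point implicit.
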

\begin{proof}
Let $\phi: U_{r,s}(\mg)\lra U_{q}^\si(\mg)$ be a linear map defined by
$$
\phi(e_i)=E_i, \quad \phi(f_i)=(s_iq_i)^{-1}F_i, \quad
\phi(\om_i)=K_i, \quad \phi(\om_i')=K_i'.
$$
It suffices to show that $\phi$ is an algebra homomorphism. Note
that
\begin{gather*}
\De^2(K_i)=K_i\ot K_i\ot K_i, \quad \De^2(K_i')=K_i'\ot K_i'\ot K_i',\\
\De^2(E_i)=E_i\ot 1\ot1+K_i\ot E_i\ot 1+K_i\ot K_i\ot E_i,\\
\De^2(F_i)= 1\ot1\ot F_i+1\ot F_i\ot K_i'+F_i\ot K_i'\ot K_i'.
\end{gather*}
Define $x*y:=m^{\si}(x\ot y),\quad\forall\  x,\, y\in U_{q}$. Then
it is easy to check
\begin{gather*}
K_i^{\pm1}*K_i^{\mp1}=K_i'^{\pm1}*K_i'^{\mp1}=1,\\
K_i*K_j=K_j*K_i,\quad K_i'*K_j'=K_j'*K_i',\quad K_i*K_j'=K_j'*K_i.
\end{gather*}
By definition, we have
\begin{eqnarray*}
K_i*E_j&=&\si(K_i,K_j)K_iE_j=\si(K_i,K_j)q^{\lan i,j\ran+\lan j,i\ran}E_jK_i\\
&=&\si(K_i,K_j)q^{\lan i,j\ran+\lan j,i\ran}\si(K_j,K_i)^{-1}E_j*K_i\\
&=&r^{\frac{\lan j,i\ran}{2}}s^{-\frac{\lan i,
j\ran}{2}}(rs^{-1})^{\frac{\lan i,j\ran+\lan j,i\ran}{2}}
r^{-\frac{\lan i,j\ran}{2}}s^{\frac{\lan j, i\ran}{2}}E_j*K_i\\
&=&r^{\lan j, i\ran}s^{-\lan i, j\ran}E_j*K_i,
\\
K_i'*E_j&=&\si(K_i',K_j)K_i'E_j=\si(K_i',K_j)q^{-\lan i,j\ran-\lan j,i\ran}E_jK_i'\\
&=&\si(K_i',K_j)q^{-\lan i,j\ran-\lan j,i\ran}\si(K_j,K_i')^{-1}E_j*K_i'\\
&=&r^{\frac{\lan j,i\ran}{2}}s^{-\frac{\lan i,
j\ran}{2}}(rs^{-1})^{-\frac{\lan i,j\ran+\lan j,i\ran}{2}}
r^{-\frac{\lan i,j\ran}{2}}s^{\frac{\lan j, i\ran}{2}}E_j*K_i'\\
&=&r^{-\lan i, j\ran}s^{\lan j, i\ran}E_j*K_i',
\\
K_i*F_j&=&\si(K_i,K_j)^{-1}K_iF_j=\si(K_i,K_j)^{-1}q^{-\lan i,j\ran-\lan j,i\ran}F_jK_i\\
&=&\si(K_i,K_j)^{-1}q^{-\lan i,j\ran-\lan j,i\ran}\si(K_j,K_i)F_j*K_i\\
&=&r^{-\frac{\lan j,i\ran}{2}}s^{\frac{\lan i,
j\ran}{2}}(rs^{-1})^{-\frac{\lan i,j\ran+\lan j,i\ran}{2}}
r^{\frac{\lan i,j\ran}{2}}s^{-\frac{\lan j, i\ran}{2}}F_j*K_i\\
&=&r^{-\lan j, i\ran}s^{\lan i, j\ran}F_j*K_i,
\\
K_i'*F_j&=&\si(K_i',K_j)^{-1}K_i'F_j=\si(K_i',K_j)^{-1}q^{\lan i,j\ran+\lan j,i\ran}F_jK_i'\\
&=&\si(K_i',K_j)^{-1}q^{\lan i,j\ran+\lan j,i\ran}\si(K_j,K_i')F_j*K_i'\\
&=&r^{-\frac{\lan j,i\ran}{2}}s^{\frac{\lan i,
j\ran}{2}}(rs^{-1})^{\frac{\lan i,j\ran+\lan j,i\ran}{2}}
r^{\frac{\lan i,j\ran}{2}}s^{-\frac{\lan j, i\ran}{2}}F_j*K_i'\\
&=&r^{\lan i, j\ran}s^{-\lan j, i\ran}F_j*K_i',
\\
E_i*F_j-F_j*E_i&=&E_iF_j-F_jE_i=\delta_{i,j}\frac{K_i-K_i'}{q_i-q_i^{-1}}.
\end{eqnarray*}
Next, we will prove
\begin{gather*}
\sum_{k=0}^{1-a_{ij}}(-1)^{k} \binom{1-a_{ij}}{k}_{r_is_i^{-1}}
c^{(k)}_{ij} E_{i}^{*(1-a_{ij}-k)}*E_{j}* E_{i}^{*k} =0, \  (i\neq
j),\\
\sum_{k=0}^{1-a_{ij}}(-1)^{k} \binom{1-a_{ij}}{k}_{r_is_i^{-1}}
c^{(k)}_{ij}\, F_{i}^{*k}*F_{j}*F_{i}^{*(1-a_{ij}-k)} =0, \ (i\ne
j),
\end{gather*} where
$c^{(k)}_{ij}=(r_is_i^{-1})^{\frac{k(k-1)}{2}} r^{k\lan
j,i\ran}s^{-k\lan i,j\ran}$.
Since
\begin{equation*}
\begin{split}
E_{i}^{*(1-a_{ij}-k)}{*}E_{j}{*}E_{i}^{*k}
&=\si(K_i,K_i)^{\frac{(a_{ij}-1)a_{ij}}{2}}\si(K_i,K_j)^{1-a_{ij}-k}\si(K_j,K_i)^{k}E_{i}^{1-a_{ij}-k} E_{j}E_{i}^{k}\\
&=(r_is_i^{-1})^{\frac{(a_{ij}-1)a_{ij}}{2}}r^{\frac{(1-a_{ij}-k)\lan
j,i\ran+k\lan i,j\ran}{2}}
s^{-\frac{(1-a_{ij}-k)\lan i,j\ran+k\lan j,i\ran}{2}}E_{i}^{1-a_{ij}-k} E_{j}E_{i}^{k}\\
\end{split}
\end{equation*}
\begin{equation*}
\begin{split}
&=(r_is_i^{-1})^{\frac{(a_{ij}-1)a_{ij}}{2}} (r^{\lan
j,i\ran}s^{-\lan i,j\ran})^{\frac{1-a_{ij}}{2}} (rs)^{\frac{-k\lan
j,i\ran+k\lan i,j\ran}{2}}E_{i}^{1-a_{ij}-k} E_{j}E_{i}^{k},
\\
c^{(k)}_{ij}(rs)^{\frac{-k\lan j,i\ran+k\lan i,j\ran}{2}}
&=(r_is_i^{-1})^{\frac{k(k-1)}{2}}(r_is_i^{-1})^{\frac{ka_{ij}}{2}}
=q_i^{k(k-1+a_{ij})},
\end{split}
\end{equation*}
then
\begin{equation*}
\begin{split}
&\sum_{k=0}^{1-a_{ij}} (-1)^{k} \binom{1-a_{ij}}{k}_{r_is_i^{-1}}
c^{(k)}_{ij} E_{i}^{*(1-a_{ij}-k)}* E_{j}*
E_{i}^{*k}\\
&\quad\,=(r_is_i^{-1})^{\frac{(a_{ij}-1)a_{ij}}{2}} (r^{\lan
j,i\ran}s^{-\lan i,j\ran})^{\frac{1-a_{ij}}{2}}\sum_{k=0}^{1-a_{ij}}
(-1)^{k} \binom{1-a_{ij}}{k}_{q_i^2}
q_i^{{k(k-1+a_{ij})}}E_{i}^{1-a_{ij}-k}
E_{j}E_{i}^{k}\\
&\quad\,=(r_is_i^{-1})^{\frac{(a_{ij}-1)a_{ij}}{2}} (r^{\lan
j,i\ran}s^{-\lan i,j\ran})^{\frac{1-a_{ij}}{2}}\sum_{k=0}^{1-a_{ij}}
(-1)^{k}
 {\left[{1-a_{ij}}\atop{k}\right]}_{q_i}E_{i}^{1-a_{ij}-k}
E_{j}E_{i}^{k}\\
&\quad\,=0.
\end{split}
\end{equation*}
Since
\begin{equation*}
\begin{split}
F_{i}^{*k}{*}F_{j}{*}F_{i}^{*(1-a_{ij}-k)}
&=\si(K_i',K_i')^{-\frac{(a_{ij}-1)a_{ij}}{2}}\si(K_i',K_j')^{-k}\si(K_j',K_i')^{-(1-a_{ij}-k)}
F_{i}^{k} F_{j}F_{i}^{1-a_{ij}-k}\\
&=(r_is_i^{-1})^{-\frac{(a_{ij}-1)a_{ij}}{2}}r^{-\frac{k\lan j,i\ran
+(1-a_{ij}-k)\lan i,j\ran}{2}}
s^{\frac{k\lan i,j\ran+(1-a_{ij}-k)\lan j,i\ran}{2}}F_{i}^{k}F_{j}F_{i}^{1-a_{ij}-k} \\
&=(r_is_i^{-1})^{-\frac{(a_{ij}-1)a_{ij}}{2}}(r^{\lan
i,j\ran}s^{-\lan j,i\ran})^{-\frac{1-a_{ij}}{2}} (rs)^{-\frac{k\lan
j,i\ran-k\lan i,j\ran}{2}} F_{i}^{k}F_{j}F_{i}^{1-a_{ij}-k},\\
c^{(1-a_{ij}-k)}_{ij}(rs)&^{\frac{-(1-a_{ij}-k)\lan j,i\ran+(1-a_{ij}-k)\lan i,j\ran}{2}}\\
&=(r_is_i^{-1})^{\frac{(a_{ij}-1+k)(a_{ij}+k)}{2}}(r_is_i^{-1})^{\frac{(1-a_{ij}-k)a_{ij}}{2}}
=q_i^{k(k-1+a_{ij})}.
\end{split}
\end{equation*}
Then
\begin{equation*}
\begin{split}
\sum_{k=0}^{1-a_{ij}}& (-1)^{k} \binom{1-a_{ij}}{k}_{r_is_i^{-1}}
c^{(k)}_{ij} F_{i}^{*k}* F_{j}*
F_{i}^{*(1-a_{ij}-k)}\\
&=(r_is_i^{-1})^{-\frac{(a_{ij}-1)a_{ij}}{2}}
(r^{\lan i,j\ran}s^{-\lan j,i\ran})^{-\frac{1-a_{ij}}{2}}\times\\
&\quad\sum_{k=0}^{1-a_{ij}} (-1)^{k}
\binom{1-a_{ij}}{k}_{q_i^2}c^{(k)}_{ij}(rs)^{-\frac{k\lan
j,i\ran-k\lan i,j\ran}{2}}
F_{i}^{k}F_{j}F_{i}^{1-a_{ij}-k}\\
&=(r_is_i^{-1})^{-\frac{(a_{ij}-1)a_{ij}}{2}}
(r^{\lan i,j\ran}s^{-\lan j,i\ran})^{-\frac{1-a_{ij}}{2}}\times\\
&\quad\sum_{k=0}^{1-a_{ij}} (-1)^{k}
\binom{1-a_{ij}}{k}_{q_i^2}c^{(k)}_{ij}(rs)^{-\frac{k\lan
j,i\ran-k\lan i,j\ran}{2}}
F_{i}^{k}F_{j}F_{i}^{1-a_{ij}-k}\\
\end{split}
\end{equation*}
\begin{equation*}
\begin{split}&=(r_is_i^{-1})^{-\frac{(a_{ij}-1)a_{ij}}{2}}
(r^{\lan i,j\ran}s^{-\lan j,i\ran})^{-\frac{1-a_{ij}}{2}}\times\\
&\quad \sum_{k=0}^{1-a_{ij}} (-1)^{1-a_{ij}-k}
\binom{1-a_{ij}}{k}_{q_i^2}c^{(1-a_{ij}-k)}_{ij}(rs)^{-\frac{(1-a_{ij}-k)\lan
j,i\ran-(1-a_{ij}-k)\lan i,j\ran}{2}}
F_{i}^{1-a_{ij}-k}F_{j}F_{i}^k\\
&=(r_is_i^{-1})^{-\frac{(a_{ij}-1)a_{ij}}{2}} (r^{\lan
i,j\ran}s^{-\lan
j,i\ran})^{-\frac{1-a_{ij}}{2}}\sum_{k=0}^{1-a_{ij}} (-1)^{k}
\binom{1-a_{ij}}{k}_{q_i^2}q_i^{{k(k-1+a_{ij})}}
F_{i}^{1-a_{ij}-k}F_{j}F_{i}^k\\
&=(r_is_i^{-1})^{-\frac{(a_{ij}-1)a_{ij}}{2}} (r^{\lan
i,j\ran}s^{-\lan
j,i\ran})^{-\frac{1-a_{ij}}{2}}\sum_{k=0}^{1-a_{ij}} (-1)^{k}
 {\left[{1-a_{ij}}\atop{k}\right]}_{q_i}F_{i}^{1-a_{ij}-k}
F_{j}F_{i}^k\\
&=0.
\end{split}
\end{equation*}
\end{proof}

\begin{rem}
Let $rs^{-1}=q^2$ and $\si':U_{q}(\mg)\times U_{q}(\mg)\lra \bK$ be
a bilinear form defined as
\begin{equation*}
\si'(x,y)=\begin{cases}
(\frac{q}{r})^{\lan\mu,\nu\ran},\qquad\forall\ \
x=K_{\mu},\ \text{or}\ K_{\mu}',\ \  y=K_{\nu},\
\text{or}\ K_{\nu}',\ \mu,\nu\in Q,\\
0,\qquad\qquad\qquad\quad\text{otherwise}.
\end{cases}
\end{equation*}
Similarly, one can check directly that $\si'$ is also a Hopf
$2$-cocycle of $U_{q}(\mg)$. In fact, $\si$ and $\si'$ are
cohomologous Hopf $2$-cocycles in the sense of Majid \cite{Ma}.
\end{rem}

\section{\bf Bigraded Hopf algebras}

\begin{lem}[\cite{AST}] \ Let $A=\bigoplus_{g\in G}A_g$ be a $G$-graded associative
algebra over a field $k$,
where $G$ is an abelian group. Let $\psi: G\times G\to k^* $ be a
$2$-cocycle of group $G$. We introduce a new multiplication $\cdot$ on
$A$ as follows: For any $x\in A_g, \,y\in A_h$, where $g,\,h\in
G$, we define
$$
x\cdot y=\psi(g,h)\,x\,y.
$$
Denote this new algebra by $A^{\psi}$. Then $A^{\psi}$ is a
$G$-graded associative algebra.
\end{lem}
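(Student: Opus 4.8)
The plan is to check directly that the scalar-twisted product $\cdot$ is associative and respects the $G$-grading, with associativity reducing exactly to the $2$-cocycle identity for $\psi$. First I would observe that, since $A=\bigoplus_{g\in G}A_g$, every element of $A$ has a unique decomposition into homogeneous components, so the rule $x\cdot y=\psi(g,h)\,xy$ for $x\in A_g$, $y\in A_h$ extends $k$-bilinearly to a well-defined product on all of $A$, and it suffices to verify the axioms on homogeneous elements. The grading is respected immediately: for $x\in A_g$, $y\in A_h$ one has $x\cdot y=\psi(g,h)\,xy\in A_{gh}$ because $\psi(g,h)\in k^{*}$ is a scalar and $A_gA_h\subseteq A_{gh}$ in the original algebra $A$; hence $A^{\psi}_g\cdot A^{\psi}_h\subseteq A^{\psi}_{gh}$.

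Next I would compute both bracketings on homogeneous $x\in A_g$, $y\in A_h$, $z\in A_{\ell}$:
\[
(x\cdot y)\cdot z=\psi(g,h)\,\psi(gh,\ell)\,(xy)z,\qquad
x\cdot(y\cdot z)=\psi(h,\ell)\,\psi(g,h\ell)\,x(yz).
\]
Since $(xy)z=x(yz)$ by associativity of the original multiplication in $A$, the two sides are equal as soon as
$$\psi(g,h)\,\psi(gh,\ell)=\psi(h,\ell)\,\psi(g,h\ell)\qquad\text{for all }g,h,\ell\in G,$$
which is precisely the (inhomogeneous) $2$-cocycle condition on $\psi\colon G\times G\to k^{*}$. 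Thus $\cdot$ is associative, and combined with the previous paragraph this gives that $A^{\psi}$ is a $G$-graded associative algebra.

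I do not expect any real obstacle: the whole content of the lemma is that the associativity constraint for a scalar-twisted product \emph{is} the group $2$-cocycle equation. The only point requiring a little care will be bookkeeping --- matching the normalization (here, the left-to-right order in which the factors $\psi(g,h)$, $\psi(gh,\ell)$ appear) with the convention in which the cocycle identity from \cite{AST} is stated. If in addition one wants $A^{\psi}$ to be unital with the same unit $1_A\in A_e$ ($e$ the identity of $G$), it is enough to replace $\psi$ by a cohomologous normalized cocycle, so that $\psi(e,g)=\psi(g,e)=1$; then $1_A\cdot x=x\cdot 1_A=x$ on homogeneous $x$, which will be the normalization used in the applications to $U_{r,s}(\mg)$ in the sequel.
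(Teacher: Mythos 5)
Your proof is correct, and it is the intended argument: the paper itself gives no proof of this lemma (it is quoted from \cite{AST}), and the content is exactly what you verify --- the twisted product preserves the grading because $\psi(g,h)$ is a scalar and $A_gA_h\subseteq A_{g+h}$, and associativity of $\cdot$ reduces, via associativity of $A$, to the multiplicative group $2$-cocycle identity $\psi(g,h)\,\psi(g+h,\ell)=\psi(h,\ell)\,\psi(g,h+\ell)$. Your closing remark about normalization is also consistent with how the lemma is used later: the cocycle $\tilde{\si}$ employed in the paper comes from a (skew) bicharacter, hence satisfies $\tilde{\si}(x,0)=\tilde{\si}(0,x)=1$ automatically, so the unit of $A$ remains the unit of $A^{\psi}$ without any adjustment.
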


\begin{defi}[\cite{HLT}]
Let $G=\{g_i\;|\;i\in I\}$ be a free abelian group. A Hopf algebra
$(A,i,m,\vep,\Delta,S)$ over a field $k$ is a $G$-bigraded Hopf
algebra if it is equipped with a $G\times G$-grading
$$
H=\sum_{(\al,\be)\in G\times G} H_{\al,\be}
$$
such that
\begin{equation*}
\begin{split}
k&\subseteq H_{0,0},\\
H_{\al,\be}H_{\al',\be'}&\subset H_{\al+\al',\be+\be'},\\
\end{split}
\end{equation*}
\begin{equation*}
\begin{split}\De(H_{\al,\be})&\subset\sum_{\ga\in G}H_{\al,\ga}\ot H_{-\ga,\be},\\
\varepsilon(H_{\al,\be})&=0,\quad \text{for } \  \al\neq -\be, \\
S(H_{\al,\be})&\subset H_{\be,\al}.
\end{split}
\end{equation*}
\end{defi}
Let $\si: G\times G\to k^{\ast}$ be a skew bicharacter over $G$.
Then one can define $\tilde{\si}:(G\times G)\times(G\times G)\lra
k^{\ast}$ such that
$$
\tilde{\si}((\al,\be),(\al',\be'))=\si(\al,\al')\si(\be,\be')^{-1}.
$$
It is clear that $\tilde{\si}$ is a $2$-cocycle of group $G\times
G$. Let $(H,1,m,\vep,\Delta,S)$ be a $G$-bigraded Hopf algebra.
Define a new product $\circ$ as
\begin{equation*}
a\circ
b=\tilde{\si}((\al,\be),(\al',\be'))ab=\si(\al,\al')\si(\be,\be')^{-1}ab,\quad\forall\
a\in A_{\al,\be}, \ b\in A_{\al',\be'}.
\end{equation*}
Then $H_\si:=(H,1,\circ,\vep,\Delta,S)$ is a $G$-bigraded Hopf algebra \cite{HLT}.

\subsection{}
For any $i\in I$ and $\mu\in Q$, define
$$ E_i\in (U_{q})_{\al_i,0},\ F_i\in (U_{q})_{0,-\al_i},\   K_i\in (U_{q})_{\al_i,-\al_i},\
K_i'\in (U_{q})_{\al_i,-\al_i}.$$
It is clear that $U_{q}(\mg)$ is a $Q$-bigraded Hopf algebra.

Assume that $r_is_i^{-1}=q^{2d_i}$, $\forall\ i\in I$. Let
\begin{equation}
p_{ij}=r^{\lan j, i\ran}s^{-\lan i, j\ran}q^{-d_ia_{ij}}.
\end{equation}
Then
\begin{equation}
p_{ii}=1,\quad p_{ij}p_{ji}=1,\quad\forall\; i, \,j\in I.
\end{equation}

\begin{pro}\label{bigrading}
Let $\zeta:Q\times Q\to \bK^*,\
\zeta(\al_i,\al_j)=p_{ij}^{\frac{1}{2}},\ \forall \,i,\,j\in I$.
Then $\zeta$ is a skew bicharacter on $Q$. As $Q$-bigraded Hopf
algebras, we have
\begin{equation}
U_{q,\zeta}(\mg)\simeq U_{r,s}(\mg).
\end{equation}
\end{pro}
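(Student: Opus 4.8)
The plan is to verify first that $\zeta$ is a well-defined skew bicharacter, and then to write down an explicit Hopf-algebra isomorphism $U_{q,\zeta}(\mg)\to U_{r,s}(\mg)$ and check it respects all the defining relations. For the bicharacter claim: since $Q$ is free abelian on $\{\al_i\}$, any assignment $\zeta(\al_i,\al_j)\in\bK^*$ extends uniquely to a bicharacter $\zeta:Q\times Q\to\bK^*$, so the only thing to confirm is that $\zeta$ is \emph{skew}, i.e. $\zeta(\mu,\nu)\zeta(\nu,\mu)=1$ for all $\mu,\nu$. By bimultiplicativity this reduces to the generators, where it follows from $p_{ij}p_{ji}=1$ (hence $\zeta(\al_i,\al_j)\zeta(\al_j,\al_i)=(p_{ij}p_{ji})^{1/2}=1$) and $p_{ii}=1$; one must also note $(rs^{-1})^{1/m}\in\bK$ together with $r_is_i^{-1}=q^{2d_i}$ guarantees the needed square roots $p_{ij}^{1/2}$ lie in $\bK$.

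Next I would form the twisted Hopf algebra $U_{q,\zeta}(\mg)=(U_q,1,\circ,\vep,\De,S)$ using the $Q$-bigrading on $U_q$ specified just above (with $E_i$ in degree $(\al_i,0)$, $F_i$ in degree $(0,-\al_i)$, $K_i,K_i'$ in degree $(\al_i,-\al_i)$), so that by the displayed construction $a\circ b=\zeta(\al,\al')\zeta(\be,\be')^{-1}ab$ for $a\in(U_q)_{\al,\be}$, $b\in(U_q)_{\al',\be'}$; by the lemma of \cite{AST}/\cite{HLT} this is again a $Q$-bigraded Hopf algebra with the \emph{same} coalgebra structure. Then I would define $\phi:U_{r,s}(\mg)\to U_{q,\zeta}(\mg)$ on generators by $\phi(e_i)=E_i$, $\phi(f_i)=\kappa_i F_i$, $\phi(\om_i)=K_i$, $\phi(\om_i')=K_i'$ for suitable scalars $\kappa_i\in\bK^*$ (the normalization needed to fix relation (R5); one expects $\kappa_i$ to be a power of $q$ analogous to the $(s_iq_i)^{-1}$ appearing in Theorem \ref{Thm1}), and check it extends to a Hopf-algebra homomorphism by verifying relations (R1)--(R7). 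Relations (R1), (R2) are immediate since the $K$'s pairwise commute and the bicharacter factors are symmetric on $U^0$. For (R3): $K_i\circ E_j=\zeta(\al_i,\al_j)\zeta(-\al_i,0)^{-1}K_iE_j=p_{ij}^{1/2}q^{d_ia_{ij}}E_jK_i$, and since $E_j\circ K_i=\zeta(\al_j,\al_i)^{-1}E_jK_i=p_{ji}^{-1/2}E_jK_i=p_{ij}^{1/2}E_jK_i$, one gets $K_i\circ E_j=q^{d_ia_{ij}}p_{ij}(E_j\circ K_i)=r^{\lan j,i\ran}s^{-\lan i,j\ran}(E_j\circ K_i)$ by the definition of $p_{ij}$; (R4) is analogous with the opposite sign on the grading. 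Relation (R5) is where the scalar $\kappa_i$ is pinned down, using that $E_i,F_j$ sit in degrees with complementary first/second components so the twist on $E_i\circ F_j-F_j\circ E_i$ is a single factor, and comparing with the known $U_q$-relation (4).

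The main obstacle — as in Theorem \ref{Thm1} — is checking that the twisted products preserve the $(r,s)$-Serre relations (R6), (R7). The approach is the same bookkeeping: for a monomial $E_i^{1-a_{ij}-k}E_jE_i^k$, its bigrading is $((1-a_{ij})\al_i+\al_j,\,0)$ independent of $k$, but the \emph{internal} twist factors accumulated in forming the $\circ$-product $E_i^{\circ(1-a_{ij}-k)}\circ E_j\circ E_i^{\circ k}$ depend on $k$ through sums of $\zeta$-values among the $E_i$'s and between $E_i$ and $E_j$. One computes that $E_i^{\circ(1-a_{ij}-k)}\circ E_j\circ E_i^{\circ k}=\zeta(\al_i,\al_i)^{\binom{1-a_{ij}}{2}}\,\zeta(\al_i,\al_j)^{1-a_{ij}-k}\,\zeta(\al_j,\al_i)^{k}\,E_i^{1-a_{ij}-k}E_jE_i^k$; since $\zeta(\al_i,\al_i)=1$ and $\zeta(\al_i,\al_j)\zeta(\al_j,\al_i)^{-1}=p_{ij}$, the $k$-dependent part is $p_{ij}^{-k}\zeta(\al_i,\al_j)^{1-a_{ij}}$, so after absorbing the $k$-independent prefactor, the $\circ$-Serre sum becomes a scalar multiple of $\sum_k(-1)^k\binom{1-a_{ij}}{k}_{r_is_i^{-1}}c_{ij}^{(k)}p_{ij}^{-k}E_i^{1-a_{ij}-k}E_jE_i^k$. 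The point is then to identify $c_{ij}^{(k)}p_{ij}^{-k}$: since $c_{ij}^{(k)}=(r_is_i^{-1})^{k(k-1)/2}r^{k\lan j,i\ran}s^{-k\lan i,j\ran}$ and $p_{ij}=r^{\lan j,i\ran}s^{-\lan i,j\ran}q^{-d_ia_{ij}}$, we get $c_{ij}^{(k)}p_{ij}^{-k}=(r_is_i^{-1})^{k(k-1)/2}q^{kd_ia_{ij}}=q_i^{k(k-1)}q_i^{ka_{ij}}=q_i^{k(k-1+a_{ij})}$, which together with $\binom{1-a_{ij}}{k}_{q_i^2}q_i^{k(k-1+a_{ij})}=\left[{1-a_{ij}\atop k}\right]_{q_i}$ (the same identity used in Theorem \ref{Thm1}) collapses the sum to the genuine quantum Serre relation (5) in $U_q$, hence to $0$. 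Relation (R7) is handled identically with the $F$-grading, giving $\kappa_i$-independent vanishing. Finally, $\phi$ is a coalgebra map because $\De,\vep,S$ are unchanged under the twist and the images $E_i,F_i,K_i^{\pm1},K_i'^{\pm1}$ have the same coproducts, counits and antipodes as $e_i,f_i,\om_i^{\pm1},\om_i'^{\pm1}$ up to the harmless scalar $\kappa_i$ (which one checks is compatible with $\De(f_i)=1\ot f_i+f_i\ot\om_i'$); surjectivity is clear since the $E_i,F_i,K_i^{\pm1},K_i'^{\pm1}$ generate, and bijectivity then follows by comparing the PBW bases (equivalently, by exhibiting the inverse on generators), so $\phi$ is an isomorphism of $Q$-bigraded Hopf algebras. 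Restricting $\phi$ to $U_{r,s}^+$ recovers Theorem 3.3 of \cite{HP1}.
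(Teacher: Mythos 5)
Your plan is correct and follows essentially the same route as the paper: twist the multiplication of the $Q$-bigraded Hopf algebra $U_q$ by $\zeta$, verify the $(r,s)$-relations on generators, and collapse the twisted Serre sums using exactly the cancellation $c^{(k)}_{ij}p_{ij}^{-k}=q_i^{k(k-1+a_{ij})}$ together with $\binom{1-a_{ij}}{k}_{q_i^2}\,q_i^{k(k-1+a_{ij})}={\left[{1-a_{ij}}\atop{k}\right]}_{q_i}$; your additional attention to the skew-bicharacter extension and to the normalization $\kappa_i$ (indeed $\kappa_i=(s_iq_i)^{-1}$ works, since $r_i-s_i=s_iq_i(q_i-q_i^{-1})$) fills in points the paper leaves implicit, as in Theorem \ref{Thm1}. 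One harmless slip in an intermediate line: by the definition of $\circ$ one has $E_j\circ K_i=\zeta(\al_j,\al_i)E_jK_i=p_{ij}^{-1/2}E_jK_i$ (not $\zeta(\al_j,\al_i)^{-1}E_jK_i=p_{ij}^{1/2}E_jK_i$), and it is this corrected value that actually gives your stated (and correct) conclusion $K_i\circ E_j=p_{ij}q^{d_ia_{ij}}\,E_j\circ K_i=r^{\lan j,i\ran}s^{-\lan i,j\ran}\,E_j\circ K_i$.
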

\begin{proof} It suffices to check the following relations:
\begin{align*}
K_i\circ E_j&=\zeta(\al_i,\al_j)\zeta^{-1}(-\al_i,0)K_iE_j\\
&=\zeta(\al_i,\al_j)q^{d_ia_{ij}}\zeta(\al_j,\al_i)^{-1}E_j\circ K_i\\
\end{align*}
\begin{align*}
&=p_{ij}q^{d_ia_{ij}}E_j\circ K_i\\
&=r^{\lan j, i\ran}s^{-\lan i, j\ran}E_j\circ K_i,
\\
K_i'\circ E_j&= \zeta(\al_i,\al_j)\zeta^{-1}(-\al_i,0)K_i'E_j\\
&=\zeta(\al_i,\al_j)q^{-d_ia_{ij}}\zeta(\al_j,\al_i)^{-1}E_j\circ K_i'\\
&=p_{ij}q^{-d_ia_{ij}}E_j\circ K_i'\\
&=r^{-\lan i, j\ran}s^{\lan j, i\ran}E_j\circ K_i,
\\
K_i\circ F_j&=\zeta(\al_i,0)\zeta^{-1}(-\al_i,-\al_j)K_iF_j\\
&=\zeta^{-1}(\al_i,\al_j)q^{-d_ia_{ij}}\zeta(\al_j,\al_i)F_j\circ K_i'\\
&=p_{ij}^{-1}q^{-d_ia_{ij}}F_j\circ K_i'\\
&=r^{-\lan j, i\ran}s^{\lan i, j\ran}F_j\circ K_i',
\\
K_i'\circ F_j&=\zeta(\al_i,0)\zeta^{-1}(-\al_i,-\al_j)K_i'F_j\\
&=\zeta^{-1}(\al_i,\al_j)q^{d_ia_{ij}}\zeta(\al_j,\al_i)F_j\circ K_i'\\
&=p_{ji}q^{d_ia_{ij}}F_j\circ K_i'\\
&=r^{\lan i, j\ran}s^{-\lan j, i\ran}F_j\circ K_i',
\\
E_i\circ F_j-F_j\circ
E_i&=E_iF_j-F_jE_i=\de_{i,j}\frac{K_i-K_i'}{q^{d_i}-q^{-d_i}}.
\end{align*}
Since
$$
E_{i}^{\circ(1-a_{ij}-k)}\circ E_{j}\circ E_{i}^{\circ k} =p_{ij}^{\frac{1-a_{ij}}{2}}p_{ij}^{-k}
E_{i}^{1-a_{ij}-k} E_{j}E_{i}^{k},
$$
\begin{equation*}
\begin{split}
\sum_{k=0}^{1-a_{ij}}& (-1)^{k} \binom{1-a_{ij}}{k}_{r_is_i^{-1}}
c^{(k)}_{ij} E_{i}^{\circ(1-a_{ij}-k)}\circ E_{j}\circ
E_{i}^{\circ k} \\
&=\sum_{k=0}^{1-a_{ij}} (-1)^{k} \binom{1-a_{ij}}{k}_{r_is_i^{-1}}
(r_is_i^{-1})^{\frac{k(k-1)}{2}} r^{k\lan j,i\ran}s^{-k\lan i,j\ran}
p_{ij}^{\frac{1-a_{ij}}{2}}p_{ij}^{-k} E_{i}^{1-a_{ij}-k}
E_{j}E_{i}^{k} \\
&=p_{ij}^{\frac{1-a_{ij}}{2}}\sum_{k=0}^{1-a_{ij}}
(-1)^{k} \binom{1-a_{ij}}{k}_{q^{2d_i}} q^{d_ik(k-1)}r^{k\lan
j,i\ran}s^{-k\lan i,j\ran}p_{ij}^{-k} E_{i}^{1-a_{ij}-k}
E_{j}E_{i}^{k}
\\
&=p_{ij}^{\frac{1-a_{ij}}{2}}\sum_{k=0}^{1-a_{ij}} (-1)^{k}
\binom{1-a_{ij}}{k}_{q^{2d_i}} q^{d_ik(k-1+a_{ij})}
E_{i}^{1-a_{ij}-k} E_{j}E_{i}^{k}
\end{split}
\end{equation*}
\begin{equation*}
\begin{split}
&=p_{ij}^{\frac{1-a_{ij}}{2}} \sum_{k=0}^{1-a_{ij}} (-1)^{k}
{\left[{1-a_{ij}}\atop{k}\right]}_{q_i}E_{i}^{1-a_{ij}-k}
E_{j}E_{i}^{k}\\
&=0.
\end{split}
\end{equation*}
Similarly,
$$
F_{i}^{\circ k}\circ F_{j}\circ F_{i}^{\circ(1-a_{ij}-k)}=
p_{ij}^{\frac{1-a_{ij}}{2}}p_{ij}^{-k}
F_{i}^{k}F_{j}F_{i}^{1-a_{ij}-k}.
$$
Then
\begin{equation*}
\begin{split}
\sum_{k=0}^{1-a_{ij}}& (-1)^{k} \binom{1-a_{ij}}{k}_{r_is_i^{-1}}
c^{(k)}_{ij}\,
F_{i}^{\circ k}\circ F_{j}\circ F_{i}^{\circ(1-a_{ij}-k)}\\
&=\sum_{k=0}^{1-a_{ij}} (-1)^{k}
\binom{1-a_{ij}}{k}_{r_is_i^{-1}}(r_is_i^{-1})^{\frac{k(k-1)}{2}}
r^{k\lan j,i\ran}s^{-k\lan
i,j\ran}p_{ij}^{\frac{1-a_{ij}}{2}}p_{ij}^{-k} F_{i}^{k}
F_{j}F_{i}^{1-a_{ij}-k}\\
&=p_{ij}^{\frac{1-a_{ij}}{2}}\sum_{k=0}^{1-a_{ij}} (-1)^{k}
\binom{1-a_{ij}}{k}_{q^{2d_i}} q^{d_ik(k-1+a_{ij})} F_{i}^{k}
F_{j}F_{i}^{1-a_{ij}-k} \\
&=p_{ij}^{\frac{1-a_{ij}}{2}} \sum_{k=0}^{1-a_{ij}} (-1)^{k}
{\left[{1-a_{ij}}\atop{k}\right]}_{q_i}F_{i}^{k}
F_{j}F_{i}^{1-a_{ij}-k}\\
&=p_{ij}^{\frac{1-a_{ij}}{2}} \sum_{k=0}^{1-a_{ij}} (-1)^{k}
{\left[{1-a_{ij}}\atop{k}\right]}_{q_i}F_{i}^{1-a_{ij}-k}
F_{j}F_{i}^{k}\\
&=0.
\end{split}
\end{equation*}

This completes the proof.
\end{proof}

As a corollary, we recover a result in \cite{HP1}.
\begin{coro}[\cite{HP1}]\label{deform2}
Let $U_{q,\zeta}^{\pm}$ be the deformation of $U_{q}^{\pm}$ by $\zeta$.
Then, as $Q$-graded algebras, we have
$$
U_{r,s}^{\pm}\simeq U_{q,\zeta}^{\pm}.
$$
\end{coro}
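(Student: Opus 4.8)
The plan is to deduce this by restricting the $Q$-bigraded Hopf algebra isomorphism of Proposition~\ref{bigrading}. That isomorphism is the algebra map $U_{r,s}(\mg)\lra U_{q,\zeta}(\mg)$ determined by $e_i\mapsto E_i$, $f_i\mapsto$ a nonzero scalar multiple of $F_i$, $\om_i\mapsto K_i$ and $\om_i'\mapsto K_i'$; in particular it sends the generators of $U_{r,s}^{\pm}$ to nonzero scalar multiples of the generators $E_i$ (resp. $F_i$) of the $\pm$-part of $U_{q,\zeta}(\mg)$, and it is $Q$-graded since $e_i$ and $E_i$ both have degree $\al_i$ while $f_i$ and $F_i$ both have degree $-\al_i$.

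First I would treat the positive part. The $\bK$-subalgebra of $U_{q,\zeta}(\mg)$ generated by the $E_i$ has underlying vector space $U_q^+=\bigoplus_{\be\in Q^+}(U_q^+)_{\be}$, and since each $E_i$ lies in bidegree $(\al_i,0)$, the deformed product $\circ$ introduced above restricts on it to
\[
x\circ y=\zeta(\be,\be')\,\zeta(0,0)^{-1}\,xy=\zeta(\be,\be')\,xy,\qquad (x\in(U_q^+)_{\be},\ y\in(U_q^+)_{\be'}),
\]
so that this subalgebra is precisely $(U_q^+)^{\zeta}=U_{q,\zeta}^+$ with its $Q^+$-grading. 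Since the isomorphism of Proposition~\ref{bigrading} then restricts to an algebra isomorphism from $U_{q,\zeta}^+$ onto the subalgebra of $U_{r,s}(\mg)$ generated by the $e_i$, namely $U_{r,s}^+$, and it preserves the $Q$-grading, we obtain $U_{r,s}^+\simeq U_{q,\zeta}^+$ as $Q$-graded algebras. Note that the fact that the $\zeta$-twisted $q$-Serre relations among the $E_i$ coincide with the $(r,s)$-Serre relations (R6) is exactly the computation already carried out inside the proof of Proposition~\ref{bigrading}, so nothing further is required here.

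The negative part is handled in the same way, with $F_i$ in bidegree $(0,-\al_i)$: the $\circ$-subalgebra of $U_{q,\zeta}(\mg)$ generated by the $F_i$ is $U_q^-$ equipped with $x\circ y=\zeta(-\be,-\be')^{-1}xy=\zeta(\be,\be')^{-1}xy$ for $x\in(U_q^-)_{-\be}$, $y\in(U_q^-)_{-\be'}$, which --- as $\zeta$ is a skew bicharacter, so $\zeta^{-1}=\zeta^{\mathrm{op}}$ --- is the $\zeta$-deformation $U_{q,\zeta}^-$ of $U_q^-$; restricting the isomorphism then yields $U_{r,s}^-\simeq U_{q,\zeta}^-$ as $Q$-graded algebras. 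Combining this with the fact that $U_q^{\pm}$ is the usual positive (resp. negative) part of the Drinfel'd-Jimbo quantum group, we recover Theorem~3.3 of \cite{HP1}. There is no genuine obstacle beyond bookkeeping: the one point that deserves a moment's care is verifying that the restriction of the group $2$-cocycle $\tilde\zeta$ on $Q\times Q$ to the ``$E$-sector'' (resp. ``$F$-sector'') really collapses to the bicharacter $\zeta$ (resp. $\zeta^{\mathrm{op}}$) that defines $U_{q,\zeta}^{\pm}$, after which the corollary drops out of Proposition~\ref{bigrading}.
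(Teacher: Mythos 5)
Your argument is correct and matches the paper's intent exactly: the paper offers no separate proof, treating the statement as an immediate consequence of Proposition~\ref{bigrading} by restricting the $Q$-bigraded isomorphism to the subalgebras generated by the $e_i$ (resp.\ $f_i$), which is precisely what you do, including the correct observation that on the $E$-sector (bidegrees $(\be,0)$) the twist collapses to $\zeta(\be,\be')$ and on the $F$-sector (bidegrees $(0,-\be)$) to $\zeta(\be,\be')^{-1}=\zeta(\be',\be)$ by skewness. Your extra care about the scalar on $f_i\mapsto F_i$ and about which convention defines $U_{q,\zeta}^{-}$ is harmless bookkeeping that the paper leaves implicit.
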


\begin{rem}
When $\mg$ is of type $D_n$,  $U_{r,s}(D_n)$ is slightly different
from $U_{r,s}'(D_n)$ defined in \cite{BGH1}. In fact, they are
related by a $Q$-bigraded deformation.

For example, for type $D_4$, we have
$$
U_{r,s}(D_4)\to\left(\begin{array}{cccc}
 rs^{-1} & r^{-1}&1&1\\
 s  & rs^{-1} & r^{-1} &r^{-1}\\
1& s &rs^{-1}&1\\
1& s & 1&rs^{-1} \\
\end{array}\right),\
U_{r,s}'(D_4)\to\left(\begin{array}{cccc}
 rs^{-1} & r^{-1}&1&1\\
  s & rs^{-1} & r^{-1} &r^{-1}\\
1& s &rs^{-1}&(rs)^{-1}\\
1& s & rs&rs^{-1}
\end{array}\right).
$$
It suffices to set
$$
(p_{ij})=\left(\begin{array}{cccc}
 1 & 1&1  &1\\
 1 & 1& 1 &1\\
 1 & 1& 1 &(rs)^{-1}\\
 1 & 1& rs&1
\end{array}\right).
$$
In general, for type $D_n$, it suffices to take
$p_{ij}=(rs)^{\de_{i,n}\de_{j,n-1}-\de_{i,n-1}\de_{j,n}}$.
\end{rem}

\subsection{}
As applications, we will give a new and simple proof for the
existence of non-degenerate skew Hopf pairing on $U_{r,s}(\mg)$.

\begin{pro}\label{deform3}
Let $\lan, \ran_{q}$ be the skew Hopf pairing on $U_{q}(\mg)$.
Define a bilinear form as follows:
$$
\lan\, ,\, \ran_{q,\zeta}:\, U_{q,\zeta}^{\leq0}\times
U_{q,\zeta}^{\geq0}\lra \bK
$$
$$
\lan y,x\ran_{q,\zeta}=\zeta(\be,\al)^{-1}\zeta(\be',\al')^{-1}\lan
y,x \ran_{q}, \quad \forall\; x\in
(U_{q,\zeta}^{\geq0})_{\al,\al'},\ y\in
(U_{q,\zeta}^{\leq0})_{\be,\be'}.
$$
Then $\lan\,,\,\ran_{q,\zeta}$ is a unique nondegenerate skew Hopf
pairing on $U_{r,s}(\mg)$.
\end{pro}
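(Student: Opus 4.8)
The plan is to obtain the pairing by transporting the known nondegenerate skew Hopf pairing $\lan\,,\,\ran_q$ of $U_q(\mg)$ across the bigraded deformation. By Proposition~\ref{bigrading} we may identify $U_{r,s}(\mg)$ with $U_{q,\zeta}(\mg)$ as $Q$-bigraded Hopf algebras, and that isomorphism restricts to isomorphisms $U_{q,\zeta}^{\leq0}\simeq U_{r,s}^{\leq0}$ and $U_{q,\zeta}^{\geq0}\simeq U_{r,s}^{\geq0}$ (compare Corollary~\ref{deform2}); so it suffices to show that $\lan\,,\,\ran_{q,\zeta}$ is the unique nondegenerate skew Hopf pairing between $U_{q,\zeta}^{\leq0}$ and $U_{q,\zeta}^{\geq0}$. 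The first point is that twisting by $\zeta$ alters only the multiplication, so these two algebras have the same underlying coalgebras and the same $Q\times Q$-bigradings as $U_q^{\leq0}$ and $U_q^{\geq0}$; in particular the bigraded coproduct rule $\De(H_{\al,\be})\subset\sum_{\ga}H_{\al,\ga}\ot H_{-\ga,\be}$ is available on both factors. Since $1$ has bidegree $(0,0)$ and $\zeta$ is a bicharacter with $\zeta(0,-)=\zeta(-,0)=1$, the normalization conditions $\lan y,1\ran_{q,\zeta}=\vep(y)$ and $\lan 1,x\ran_{q,\zeta}=\vep(x)$ follow at once from the corresponding identities for $\lan\,,\,\ran_q$.

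The substance of the proof is the bimultiplicativity of $\lan\,,\,\ran_{q,\zeta}$, i.e. identities such as
\[
\lan y,\ x\circ x'\ran_{q,\zeta}=\sum\lan y_{(1)},x\ran_{q,\zeta}\,\lan y_{(2)},x'\ran_{q,\zeta}
\]
for bihomogeneous $x,x'\in U_{q,\zeta}^{\geq0}$, $y\in U_{q,\zeta}^{\leq0}$, together with the mirror identity in which the product sits on the left factor. Expanding the left-hand side, $x\circ x'=\zeta(\cdots)\,xx'$, where the scalar is read off from the bidegrees of $x$ and $x'$ and $xx'$ is bihomogeneous of the sum of those bidegrees; one then applies the bimultiplicativity of $\lan\,,\,\ran_q$. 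On the right-hand side one distributes the $\zeta$-scalars of $\lan\,,\,\ran_{q,\zeta}$ over the two tensor factors of $\De(y)$, whose bidegrees are governed by the bigraded coproduct rule above. The crucial observation is that $\lan\,,\,\ran_q$ is $Q$-graded—it pairs $(U_q^-)_{-\be}$ with $(U_q^+)_{\be}$ and vanishes between non-matching $Q^+$-degrees—so in any nonzero summand on the right the intermediate bidegree $\ga$ occurring in $\De(y)$ is \emph{uniquely determined} by the bidegrees of $x$ and $y$. Substituting this value of $\ga$ and using that $\zeta$ is a skew bicharacter, so that $\zeta(-\al,\be)=\zeta(\al,\be)^{-1}=\zeta(\be,\al)$ and exponents add, the equality of the two sides collapses to a short identity among products of $\zeta$, which one verifies directly; the mirror identity is handled symmetrically, and compatibility with the antipode is then automatic (it is a formal consequence of bimultiplicativity and normalization for pairings of Hopf algebras).

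Uniqueness and nondegeneracy are cheap. Rescaling a bilinear form on $U_{q,\zeta}^{\leq0}\times U_{q,\zeta}^{\geq0}$ by the inverse $\zeta$-scalars (the map $\lan\,,\,\ran'\mapsto\lan\,,\,\ran''$ with $\lan y,x\ran''=\zeta(\be,\al)\zeta(\be',\al')\lan y,x\ran'$ for $x$ of bidegree $(\al,\al')$ and $y$ of bidegree $(\be,\be')$) is a bijection onto the bilinear forms on $U_q^{\leq0}\times U_q^{\geq0}$, and—running the computation of the previous paragraph in reverse—it carries skew Hopf pairings to skew Hopf pairings; since $\lan\,,\,\ran_q$ is the unique nondegenerate skew Hopf pairing on $U_q(\mg)$, its preimage $\lan\,,\,\ran_{q,\zeta}$ is the unique nondegenerate one on $U_{q,\zeta}(\mg)\simeq U_{r,s}(\mg)$. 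Nondegeneracy is preserved because the twist multiplies the restriction of the pairing to each pair of bihomogeneous components by a nonzero scalar. The step I expect to be the real obstacle is the bookkeeping concealed in the middle paragraph: one must check that the $\zeta$-exponents produced by the deformed product on the $U^{\geq0}$-side cancel exactly those produced by splitting $\zeta$ across the (undeformed) coproduct on the $U^{\leq0}$-side, and this cancellation works precisely because $U_q(\mg)$ is a $Q$-bigraded Hopf algebra and $\lan\,,\,\ran_q$ is $Q$-homogeneous.
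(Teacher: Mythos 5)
Your proposal takes essentially the same route as the paper's proof: transport $\lan\,,\,\ran_q$ by the bidegree-dependent $\zeta$-rescaling, check the multiplicativity identities by noting that the intermediate degree occurring in the coproduct is pinned down by the $Q$-homogeneity (sum-to-zero constraints) of $\lan\,,\,\ran_q$ and then cancelling the $\zeta$-factors via the skew bicharacter identities, and deduce uniqueness and nondegeneracy from the corresponding properties of $\lan\,,\,\ran_q$. One minor caveat: with the paper's skew-pairing convention the product identity carries a flip, $\lan y,\,x_1\circ x_2\ran_{q,\zeta}=\sum\lan y_{(1)},x_2\ran_{q,\zeta}\,\lan y_{(2)},x_1\ran_{q,\zeta}$, so your displayed identity should be read with that reversal, exactly as your degree bookkeeping in fact requires.
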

\begin{proof} It is clear that $\lan\,,\,\ran_{q,\zeta}$ is unique and
nondegenerate since $\lan\,,\,\ran_{q}$ is nondegenerate.
\begin{equation*}
\begin{split}
\lan K_{i}',K_{j}\ran_{q,\zeta}&=\zeta(\al_i,\al_j)^{-2}\lan
K_{i}',K_{j}\ran_{q}=p_{ij}^{-1}q^{d_ia_{ij}}\\
&=p_{ji}q^{d_ia_{ij}}=r^{\lan i,j\ran}s^{-\lan
j,i\ran}q^{-d_{i}a_{ij}}q^{d_ia_{ij}}=r^{\lan i,j\ran}s^{-\lan
j,i\ran},
\\
\lan F_i,E_j\ran_{q,\zeta}&=\lan
F_i,E_j\ran_{q}=\de_{i,j}\frac{-1}{q_i-q_i^{-1}}.
\end{split}
\end{equation*}
For any $y\in(U_{q,\zeta}^{\leq0})_{\be,\ga},\
x_i\in(U_{q,\zeta}^{\geq0})_{\be_i,\ga_i}$, we have
\begin{align*}
\lan y,x_1\circ x_2\ran_{q,\zeta}
&=\zeta(\be,\be_1+\be_2)^{-1}\zeta(\ga,\ga_1+\ga_2)^{-1}\zeta(\be_1,\be_2)\zeta(\ga_1,\ga_2)^{-1}\lan y, x_1x_2\ran_{q}\\
&=\zeta(\be,\be_1+\be_2)^{-1}\zeta(\ga,\ga_1+\ga_2)^{-1}\zeta(\be_1,\be_2)\zeta(\ga_1,\ga_2)^{-1}\lan y_1,x_2\ran_{q}\lan y_2,x_1\ran_{q},
\end{align*}
where $\De(y)=\sum y_1\ot y_2, \ y_1\in
(U_{q,\zeta}^{\leq0})_{\be,\nu}, \ y_2\in
(U_{q,\zeta}^{\leq0})_{-\nu,\ga}$ with $\nu\in Q$.

On the other hand,
\begin{align*}
\lan y_1,x_2\ran_{q,\zeta}\lan y_2,x_1\ran_{q,\zeta}
&=\zeta(\be,\be_2)^{-1}\zeta(\nu,\ga_2)^{-1}\lan y_1, x_2\ran_{q}\zeta(-\nu,\be_1)^{-1}\zeta(\ga,\ga_1)^{-1}\lan y_2, x_1\ran_{q}\\
&=\zeta(\be,\be_2)^{-1}\zeta(\nu,\ga_2)^{-1}\zeta(-\nu,\be_1)^{-1}\zeta(\ga,\ga_1)^{-1}\lan y_1, x_2\ran_{q}\lan y_2, x_1\ran_{q}.
\end{align*}
Since $\be+\nu+\be_2+\ga_2=0, \ \ga-\nu+\be_1+\ga_1=0$,
\begin{align*}
&\zeta(\be,\be_2)^{-1}\zeta(\nu,\ga_2)^{-1}\zeta(-\nu,\be_1)^{-1}\zeta(\ga,\ga_1)^{-1}\\
&\quad=\zeta(\be,\be_2)^{-1}\zeta(\be_1+\ga_1+\ga,\ga_2)^{-1}\zeta(\be_2+\ga_2+\be,\be_1)^{-1}\zeta(\ga,\ga_1)^{-1}\\
&\quad=\zeta(\be,\be_1+\be_2)^{-1}\zeta(\ga,\ga_1+\ga_2)^{-1}\zeta(\be_1,\be_2)\zeta(\ga_1,\ga_2)^{-1}.
\end{align*}
Hence, $\lan y,x_1\circ x_2\ran_{q,\zeta}=\lan
y_1,x_2\ran_{q,\zeta}\lan y_2,x_1\ran_{q,\zeta}$.
Similarly,
\begin{align*}
\lan y_1\circ y_2,x\ran_{q,\zeta}
&=\zeta(\be_1+\be_2,\be)^{-1}\zeta(\ga_1+\ga_2,\ga)^{-1}\zeta(\be_1,\be_2)
\zeta(\ga_1,\ga_2)^{-1}\lan y_1y_2,x\ran_{q}\\
&=\zeta(\be_1+\be_2,\be)^{-1}\zeta(\ga_1+\ga_2,\ga)^{-1}\zeta(\be_1,\be_2)
\zeta(\ga_1,\ga_2)^{-1}\lan y_1,x_1\ran_{q}\lan y_2,x_2\ran_{q},
\end{align*}
where $\De(x)=\sum x_1\ot x_2, \ x_1\in
(U_{q,\zeta}^{\geq0})_{\be,\nu}, \ x_2\in
(U_{q,\zeta}^{\geq0})_{-\nu,\ga}$. Since
\begin{align*}
\lan y_1,x_1\ran_{q,\zeta}\lan y_2,x_2\ran_{q,\zeta}
&=\zeta(\be_1,\be)^{-1}\zeta(\ga_1,\nu)^{-1}\zeta(\be_2,-\nu)^{-1}
\zeta(\ga_2,\ga)^{-1}\lan y_1,x_1\ran_{q}\lan y_2,x_2\ran_{q}.
\end{align*}
Since $\be_1+\ga_1+\be+\nu=0, \ \be_2+\ga_2+\ga-\nu=0$,
\begin{align*}
&\zeta(\be_1,\be)^{-1}\zeta(\ga_1,\nu)^{-1}\zeta(\be_2,-\nu)^{-1}\zeta(\ga_2,\ga)^{-1}\\
&\quad= \zeta(\be_1,\be)^{-1}\zeta(\ga_1,\be_2+\ga_2+\ga)^{-1}\zeta(\be_2,\be_1+\ga_1+\be)^{-1}\zeta(\ga_2,\ga)^{-1}\\
&\quad=\zeta(\be_1+\be_2,\be)^{-1}\zeta(\ga_1+\ga_2,\ga)^{-1}\zeta(\be_1,\be_2)
\zeta(\ga_1,\ga_2)^{-1}.
\end{align*}
Hence, $\lan y_1\circ y_2,x\ran_{q,\zeta}=\lan
y_1,x_1\ran_{q,\zeta}\lan y_2,x_2\ran_{q,\zeta}$.
\end{proof}

\begin{coro} Let $\lan\, ,\, \ran_{r,s}$ be the skew Hopf pairing on
$U_{r,s}(\mg)$ constructed in \cite{HP1} $($see also \cite{BW1, BGH1}$)$, we have
$\lan\, ,\, \ran_{q,\zeta}=\lan\, ,\, \ran_{r,s}$.
\end{coro}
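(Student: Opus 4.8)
The plan is to exploit the rigidity of skew Hopf pairings between $U_{r,s}^{\leq0}$ and $U_{r,s}^{\geq0}$: any such pairing is completely determined by its values on the algebra generators. Starting from $\lan y,1\ran=\vep(y)$, $\lan 1,x\ran=\vep(x)$ together with the two compatibilities $\lan y,xx'\ran=\sum\lan y_1,x'\ran\lan y_2,x\ran$ and $\lan yy',x\ran=\sum\lan y,x_1\ran\lan y',x_2\ran$, one expands any pair of PBW-type monomials and reduces $\lan y,x\ran$ to a polynomial in the scalars $\lan f_i,e_j\ran$, $\lan\om_i^{\pm1},\om_j^{\pm1}\ran$, $\lan{\om_i'}^{\pm1},\om_j^{\pm1}\ran$, $\lan{\om_i'}^{\pm1},{\om_j'}^{\pm1}\ran$, $\lan\om_i^{\pm1},e_j\ran$, $\lan{\om_i'}^{\pm1},e_j\ran$, $\lan f_i,\om_j^{\pm1}\ran$ and $\lan f_i,{\om_j'}^{\pm1}\ran$. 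Hence it suffices to check that $\lan\,,\,\ran_{q,\zeta}$ and $\lan\,,\,\ran_{r,s}$ agree on these generator pairs, since both are skew Hopf pairings on $U_{r,s}(\mg)$ (for $\lan\,,\,\ran_{q,\zeta}$ this is Proposition \ref{deform3}).

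Concretely, I would transport $\lan\,,\,\ran_{q,\zeta}$ to $U_{r,s}(\mg)$ along the isomorphism of Proposition \ref{bigrading}, under which $e_i\leftrightarrow E_i$, $\om_i\leftrightarrow K_i$, $\om_i'\leftrightarrow K_i'$, while $f_i$ corresponds to the rescaled generator $(s_iq_i)^{-1}F_i$ (cf. Theorem \ref{Thm1}). All ``mixed'' values $\lan f_i,\om_j^{\pm1}\ran$, $\lan{\om_i'}^{\pm1},e_j\ran$, $\lan\om_i^{\pm1},e_j\ran$ vanish for $\lan\,,\,\ran_{q,\zeta}$ because the $Q\times Q$-bidegrees of the two arguments are never opposite, matching the corresponding vanishing for $\lan\,,\,\ran_{r,s}$ from \cite{HP1}. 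For the surviving entries, the proof of Proposition \ref{deform3} already records $\lan K_i',K_j\ran_{q,\zeta}=r^{\lan i,j\ran}s^{-\lan j,i\ran}$ and $\lan F_i,E_j\ran_{q,\zeta}=\de_{i,j}(-1)/(q_i-q_i^{-1})$, while $\lan K_i,K_j\ran_{q,\zeta}$ and $\lan K_i',K_j'\ran_{q,\zeta}$ are forced by the grading/bicharacter compatibility built into the definition of $\lan\,,\,\ran_{q,\zeta}$. One then compares term by term with the defining table of $\lan\,,\,\ran_{r,s}$ in \cite{HP1} (equivalently \cite{BW1,BGH1}), e.g. $\lan\om_i',\om_j\ran_{r,s}=r^{\lan i,j\ran}s^{-\lan j,i\ran}$ and $\lan f_i,e_j\ran_{r,s}=\de_{i,j}/(s_i-r_i)$.

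The point demanding the most care — and the main obstacle — is the bookkeeping of normalizations, so that equality holds on the nose rather than merely up to units. Here the rescaling factor is exactly what makes it work: since $r_is_i^{-1}=q_i^2$ one has $s_i-r_i=-s_iq_i(q_i-q_i^{-1})$, hence $(s_iq_i)^{-1}\lan F_i,E_j\ran_{q,\zeta}=\de_{i,j}(s_iq_i)^{-1}(-1)/(q_i-q_i^{-1})=\de_{i,j}/(s_i-r_i)=\lan f_i,e_j\ran_{r,s}$, and one checks similarly that the Cartan entries $\lan\om_i,\om_j\ran$, $\lan\om_i',\om_j\ran$, $\lan\om_i',\om_j'\ran$ coincide. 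Once every generator value has been matched, the rigidity of the first paragraph — equivalently, the uniqueness of the nondegenerate skew Hopf pairing already invoked in the proof of Proposition \ref{deform3}, now applied to $\lan\,,\,\ran_{r,s}$ — gives $\lan\,,\,\ran_{q,\zeta}=\lan\,,\,\ran_{r,s}$, which at the same time refines Corollary \ref{deform2} to the level of pairings.
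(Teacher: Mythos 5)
Your proposal is correct and follows essentially the argument the paper intends: the corollary is stated without a separate proof precisely because it is meant to follow from the uniqueness assertion in Proposition \ref{deform3} together with the generator values $\lan K_i',K_j\ran_{q,\zeta}=r^{\lan i,j\ran}s^{-\lan j,i\ran}$ and $\lan F_i,E_j\ran_{q,\zeta}=-\de_{i,j}/(q_i-q_i^{-1})$ computed there, which is exactly your rigidity-plus-generator-matching scheme. The one point where you go beyond the paper is the explicit bookkeeping of the identification $f_i\leftrightarrow (s_iq_i)^{-1}F_i$ and the identity $s_i-r_i=-s_iq_i(q_i-q_i^{-1})$, which the paper leaves implicit; this is a harmless (indeed welcome) refinement rather than a different method.
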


\section{\bf Deformed representation theory}

In the above sections, we considered $U_{r,s}(\mg)$ as a deformation
structure of $U_{q,q^{-1}}(\mg)$. Correspondingly, we continue to
consider $U_{r,s}(\mg)$-modules as the deformation structure of
$U_{q,q^{-1}}(\mg)$-modules. To this end, we need to extend the
definitions of (skew)bicharacter $\zeta$ or $p_{ij}$ to be defined
on $\La\times \La$ such that
$\zeta(\lambda,\al_i)=\Pi_{j=1}^n\zeta(\al_j,\al_i)^{\frac{a_j}{m}}=\zeta(\al_i,\lambda)^{-1}$,
or $p_{\lambda,\al_i}=\Pi_{j=1}^n(p_{ji}^{\frac{1}m})^{a_j}$ for
$\lambda=\frac1{m}\sum_ja_j\al_j\in\La$.

\begin{defi}
The category $\O^{r,s}$ consists of finite-dimensional
$U_{r,s}(\mg)$-modules $V^{r,s}$ $($of type $1$$)$ satisfying the
following conditions:

$(1)$\ $V^{r,s}$ has a weight space decomposition
$V^{r,s}=\bigoplus_{\la\in\La}V^{r,s}_{\la}$, where
$$
V^{r,s}_{\la}=\{v\in V^{r,s}\mid
\om_{i}v=r^{\lan\la,\al_i\ran}s^{-\lan\al_i,\la\ran}v,\
\om_{i}'v=r^{-\lan\al_i,\la\ran}s^{\lan\la,\al_i\ran}v, \ \forall\;
i\in I\}
$$
and $\dim V_{\la}^{r,s}<\infty$ for all $\la\in\La$.

$(2)$\ there exist a finite number of weights $\la_1,\dots, \la_t\in
\La$ such that
$$
\wt (V^{r,s})\subset D(\la_1)\cup\cdots\cup D(\la_t),
$$
where $D(\la_i):=\{\mu\in\La\mid \mu<\la_i\}$. The morphisms are
taken to be usual $U_{r,s}(\mg)$-module homomorphisms.
\end{defi}

Let $r=q$ and $s=q^{-1}$, we get the category
$\O^{q}=\O^{q,q^{-1}}$.
%We note that the tensor product structure of
%$\O^{q}$ are recently studied by \cite{HZ}.

\begin{pro}
Let $V^{q}\in\text{Ob}({\O^{q}})$, Then $V^{q}$ has a natural
$U_{q,\zeta}(\mg)$-module structure.
$$
x\cdot_{\zeta}v=\zeta(\al-\be,\la)\zeta(\al,\be)x.v,\quad \forall\;
x\in (U_{q})_{\al,\be},\ \forall\; v\in V_{\la}^{q}.
$$
Denote this module by $V^{q,\zeta}$.
\end{pro}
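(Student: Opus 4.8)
The plan is to invoke the isomorphism $U_{q,\zeta}(\mg)\simeq U_{r,s}(\mg)$ of Proposition~\ref{bigrading}: it suffices to make the underlying vector space of $V^q$ into a $U_{q,\zeta}(\mg)$-module by the displayed rule, and then transport the action along that isomorphism. First I would record the two facts that make the rule meaningful. On one hand, $U_q(\mg)=\bigoplus_{(\al,\be)}(U_q)_{\al,\be}$ is $Q$-bigraded with the generators placed as in the paragraph preceding Proposition~\ref{bigrading}; on the other hand, inspecting the generators shows $x.v\in V^q_{\la+\al+\be}$ whenever $x\in(U_q)_{\al,\be}$ and $v\in V^q_\la$. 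Together with the extension of $\zeta$ to a skew bicharacter on $\La\times\La$ (in particular on $Q\times\La$), this shows that $x\cdot_\zeta v$ is well defined and linear in $x$ on each bidegree, hence extends to a linear map $U_q\ot V^q\to V^q$.

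The module axioms then come down to two checks. The unit axiom is trivial: $1\in(U_q)_{0,0}$, so $1\cdot_\zeta v=\zeta(0,\la)\zeta(0,0)\,1.v=v$. For associativity one must show $(x\circ y)\cdot_\zeta v=x\cdot_\zeta(y\cdot_\zeta v)$ for $x\in(U_q)_{\al,\be}$, $y\in(U_q)_{\al',\be'}$, $v\in V^q_\la$. Expanding the left side with $x\circ y=\zeta(\al,\al')\zeta(\be,\be')^{-1}xy$, $xy\in(U_q)_{\al+\al',\be+\be'}$, and the right side with $y.v\in V^q_{\la+\al'+\be'}$, both sides become an explicit scalar times $(xy).v$; after cancelling the common factor $\zeta(\al-\be,\la)\zeta(\al'-\be',\la)\zeta(\al,\be)\zeta(\al',\be')$ the required identity reduces to
\[
\zeta(\al-\be,\al'+\be')=\zeta(\al,\al')\zeta(\be,\be')^{-1}\zeta(\al,\be')\zeta(\al',\be).
\]
Using bimultiplicativity of $\zeta$ in each slot, the left side equals $\zeta(\al,\al')\zeta(\al,\be')\zeta(\be,\al')^{-1}\zeta(\be,\be')^{-1}$, so the whole identity collapses to the skew-symmetry $\zeta(\be,\al')\zeta(\al',\be)=1$, which holds since $\zeta(\al_i,\al_j)\zeta(\al_j,\al_i)=(p_{ij}p_{ji})^{1/2}=1$ by Proposition~\ref{bigrading} and extends to all of $Q$, hence to $\La$, by bimultiplicativity.

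It remains to observe that $V^{q,\zeta}\in\text{Ob}(\O^{r,s})$ and that the construction is natural. Under $U_{q,\zeta}\simeq U_{r,s}$ the element $\om_i$ corresponds to $K_i\in(U_q)_{\al_i,-\al_i}$, which acts on $V^q_\la$ by a scalar (a power of $q$ times a $\zeta$-factor); similarly for $\om_i'$. Hence $V^q=\bigoplus_\la V^q_\la$ is a weight decomposition for $U_{r,s}(\mg)$, with weights merely relabelled by a $\zeta$-dependent shift, and the finiteness conditions defining $\O^{r,s}$ are inherited from $\O^q$ because the underlying graded space is unchanged. Naturality is formal: a $U_q(\mg)$-module map $f\colon V^q\to W^q$ preserves weights, so $f(x\cdot_\zeta v)=\zeta(\al-\be,\la)\zeta(\al,\be)f(x.v)=\zeta(\al-\be,\la)\zeta(\al,\be)\,x.f(v)=x\cdot_\zeta f(v)$, whence $f$ is also $U_{q,\zeta}(\mg)$-linear.

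I expect the only real obstacle to be the bookkeeping of the $\zeta$-exponents in the associativity computation, together with checking that the extension of $\zeta$ from $Q\times Q$ to $\La\times\La$---which involves $m$-th roots and relies on the standing hypotheses $(rs^{-1})^{1/m}\in\bK$ and $m\La\subseteq Q$---remains a genuine skew bicharacter, so that bimultiplicativity and the identity $\zeta(\mu,\nu)\zeta(\nu,\mu)=1$ stay available in the needed generality. Once that is secured the verification is routine.
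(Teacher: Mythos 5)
Your proof is correct, but it takes a genuinely different route from the paper. The paper's own proof works through the presentation of $U_{q,\zeta}(\mg)\simeq U_{r,s}(\mg)$: it first computes the $\cdot_\zeta$-action of the generators $K_i,K_i',E_i,F_i$ on a weight vector (in particular showing $K_i$ acts by $r^{\lan\la,\al_i\ran}s^{-\lan\al_i,\la\ran}$ and $K_i'$ by $r^{-\lan\al_i,\la\ran}s^{\lan\la,\al_i\ran}$), then verifies a sample defining relation, namely $K_i\cdot_\zeta(E_j\cdot_\zeta(K_i^{-1}\cdot_\zeta v))=(K_i\circ E_j\circ K_i^{-1})\cdot_\zeta v$, and dismisses the remaining relations (including the Serre relations) with ``similarly.'' You instead check the module axiom $(x\circ y)\cdot_\zeta v=x\cdot_\zeta(y\cdot_\zeta v)$ uniformly on homogeneous elements, using that $x.v\in V^q_{\la+\al+\be}$ for $x\in(U_q)_{\al,\be}$ and bimultiplicativity of $\zeta$, and you correctly reduce the whole verification to the single skew-symmetry identity $\zeta(\be,\al')\zeta(\al',\be)=1$; your displayed bicharacter identity and its collapse are right. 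This buys a cleaner, relation-free argument that automatically handles the Serre relations and in fact works for any bigraded module over a bigraded Hopf algebra twisted by a skew bicharacter, whereas the paper's computation has the side benefit of exhibiting the explicit $(r,s)$-eigenvalues of $\om_i,\om_i'$, which is what is used later to see that $V^{q,\zeta}$ lies in $\O^{r,s}$. Your closing caveats (consistency of the square roots and of the extension of $\zeta$ from $Q\times Q$ to $\La\times\La$, and extending the weight-compatibility from generators to all homogeneous elements) are exactly the points one should record, but they are routine and are glossed at the same level in the paper itself.
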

\begin{proof}
By the definition above,
\begin{align*}
K_i\cdot_{\zeta}v&=\zeta(2\al_i,\la)K_i.v=p_{\al_i\la}q^{(\al_i,\la)}v=r^{\lan\la,\al_i\ran}s^{-\lan\al_i,\la\ran}v,\\
K_i'\cdot_{\zeta}v&=\zeta(2\al_i,\la)K_i'.v=p_{\al_i\la}q^{-(\al_i,\la)}v=p_{\la\al_i}^{-1}q^{-(\al_i,\la)}v=r^{-\lan\al_i,\la\ran}s^{\lan\la,\al_i\ran}v,\\
E_i\cdot_{\zeta}v&=\zeta(\al_i,\la)E_i.v,\\
F_i\cdot_{\zeta}v&=\zeta(\al_i,\la)F_i.v.
\end{align*}
For any $v\in V^{q}_{\la}$ and $i\in I$,
\begin{align*}
K_i\cdot_{\zeta}(E_j\cdot_{\zeta}(K_i^{-1}\cdot_{\zeta}v))=r^{\lan
j, i\ran}s^{-\lan i, j\ran} \zeta(\al_j,\la)E_j.v=(K_i\circ E_j\circ
K_i^{-1})\cdot_{\zeta}v.
\end{align*}
Similarly, we can check the other relations.
\end{proof}

Let $\O^{q,\zeta}$ be the category consisting of
$U_{q,\zeta}(\mg)$-modules $V^{q,\zeta}$.
\begin{theo}
As braided tensor categories, $\O^{q}$  is equivalent to
$\O^{q,\zeta}$.
\end{theo}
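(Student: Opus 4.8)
The plan is to exhibit a strict monoidal functor $\O^{q}\to\O^{q,\zeta}$ which is the identity on morphisms and underlying vector spaces, sends each object $V^{q}$ to the twisted module $V^{q,\zeta}$ constructed in the Proposition just above, and then check that this functor is in fact an equivalence of \emph{braided} tensor categories. The inverse functor is built the same way using the inverse bicharacter $\zeta^{-1}$, so essential surjectivity and full faithfulness will be immediate once the functor is shown to be well defined. Thus the real content is (a) compatibility with the tensor structure, i.e.\ producing a natural isomorphism $J_{V,W}\colon (V\ot W)^{q,\zeta}\xrightarrow{\ \sim\ }V^{q,\zeta}\ot W^{q,\zeta}$, and (b) compatibility with the braiding.

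\textbf{Step 1: the tensor structure.} First I would recall that for $V^{q},W^{q}\in\O^{q}$ the tensor product is defined via the coproduct of $U_{q}$, and that in $\O^{q,\zeta}$ the tensor product is defined via the same coproduct (which by the bigrading axiom $\De(H_{\al,\be})\subset\sum_{\ga}H_{\al,\ga}\ot H_{-\ga,\be}$ is unchanged by the $2$-cocycle twist, since $\widetilde{\si}$ only rescales products, not coproducts). On a weight vector $v\ot w$ with $v\in V_\la$, $w\in V_\mu$, the bigrading of $x\in (U_q)_{\al,\be}$ acts on $V^{q,\zeta}\ot W^{q,\zeta}$ with an extra scalar coming from the two separate twists, whereas on $(V\ot W)^{q,\zeta}$ it acts with a single twist relative to the total weight $\la+\mu$. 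Comparing the two using the bicharacter identity $\zeta(\al-\be,\la+\mu)=\zeta(\al-\be,\la)\zeta(\al-\be,\mu)$ together with $\zeta(\al,\be)$ being a genuine bicharacter, one finds the discrepancy is a scalar depending only on $\la$ and $\mu$ (not on $x$). Concretely I expect $J_{V,W}(v\ot w)=\zeta(\la,\mu)\,(v\ot w)$ (or its inverse, up to the precise conventions in the definition of $V^{q,\zeta}$), and the pentagon/associativity and unit coherence reduce to the cocycle/bicharacter identities for $\zeta$ already used in Proposition \ref{bigrading} and Proposition \ref{deform3}. Naturality in $V$ and $W$ is automatic since morphisms are weight-preserving.

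\textbf{Step 2: the braiding.} The braiding on $\O^{q}$ is the usual one coming from the universal $R$-matrix of $U_q(\mg)$ (equivalently, from the Drinfel'd double structure and the skew Hopf pairing $\lan\,,\,\ran_q$); the braiding on $\O^{q,\zeta}$ is the one coming from the twisted pairing $\lan\,,\,\ran_{q,\zeta}$ of Proposition \ref{deform3}, which by the Corollary equals $\lan\,,\,\ran_{r,s}$. I would verify that the hexagon-type compatibility holds, i.e.\ that $J_{W,V}\circ c^{q}_{V,W}=c^{q,\zeta}_{V^{q,\zeta},W^{q,\zeta}}\circ J_{V,W}$ as maps $(V\ot W)^{q,\zeta}\to W^{q,\zeta}\ot V^{q,\zeta}$. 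On a weight vector the $R$-matrix contributes a ``diagonal'' scalar $q^{(\la,\mu)}$ (from its Cartan part) times a strictly-upper-triangular piece $\sum_\be E_\be^{(\cdot)}\ot F_\be^{(\cdot)}$-type sum; twisting by $\zeta$ rescales both the diagonal scalar (absorbing the $r,s$ asymmetry, exactly as in the computation $\lan K_i',K_j\ran_{q,\zeta}=r^{\lan i,j\ran}s^{-\lan j,i\ran}$) and each term of the nilpotent part by the appropriate $p$-factor, and one checks these combine with the $\zeta(\la,\mu)$ factors from $J$ to give precisely the $U_{r,s}(\mg)$ braiding. This is essentially a bookkeeping exercise in the same bicharacter algebra as Step 1.

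\textbf{The main obstacle} I anticipate is purely organizational rather than conceptual: pinning down the exact scalar $J_{V,W}$ so that it is simultaneously compatible with the associativity constraint (pentagon) \emph{and} with the braiding (hexagons), given the several sign/inverse conventions floating around in the definitions of $\zeta$ on $\La\times\La$, of $V^{q,\zeta}$, and of the $R$-matrix normalization. A clean way to sidestep the heaviest computation is to invoke the general principle (going back to Hodges--Levasseur--Toro and the $2$-cocycle/bigrading formalism of Section 5) that twisting a quasitriangular bigraded Hopf algebra $H$ by a skew bicharacter $\zeta$ and simultaneously twisting every $H$-module by the induced ``module cocycle'' yields an equivalence of braided module categories $H\text{-mod}\simeq H_\zeta\text{-mod}$; applying this to $H=U_q(\mg)$ and combining with $U_{q,\zeta}(\mg)\simeq U_{r,s}(\mg)$ (Proposition \ref{bigrading}) and $\O^{q,\zeta}\simeq\O^{r,s}$ gives the theorem. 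I would present the functor and $J_{V,W}$ explicitly, check the coherence diagrams reduce to the bicharacter identities, and relegate the verification that $c^{q,\zeta}$ is the $U_{r,s}$-braiding to the already-established identification $\lan\,,\,\ran_{q,\zeta}=\lan\,,\,\ran_{r,s}$ of Section 6.
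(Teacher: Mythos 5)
Your construction coincides with the paper's for the monoidal part: the paper's functor $F^{\zeta}$ is the identity on underlying spaces and the tensor-structure isomorphism is exactly your $J_{V,W}$, namely $\xi_{V^q,V'^q}(v\ot v')=\zeta(\mu',\mu)\,v\ot v'$ on weight vectors, and the paper's main computation is precisely the check you describe that this scalar intertwines the twisted action of a bigraded element $x\in (U_q)_{\al,\be}$, using only the bicharacter identities. Where you genuinely diverge is the braiding. The paper takes a shortcut: it \emph{defines} the braiding on $\O^{q,\zeta}$ by transport of structure, $R^{q,\zeta}_{V^{q,\zeta},V'^{q,\zeta}}:=\xi_{V'^q,V^q}\circ R^{q}_{V^q,V'^q}\circ \xi_{V^q,V'^q}^{-1}$, so compatibility of $F^{\zeta}$ with the braiding is automatic and the only thing noted is that the transported operators still satisfy the braid relation. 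You instead propose to verify that the transported braiding agrees with the braiding intrinsically attached to $\O^{q,\zeta}\simeq\O^{r,s}$ via the twisted pairing $\lan\,,\,\ran_{q,\zeta}=\lan\,,\,\ran_{r,s}$ (Cartan part $q^{(\la,\mu)}$ plus nilpotent part, term-by-term rescaled by $p$-factors). That is more work, but it proves a stronger and, for the final Corollary $\O^{q}\simeq\O^{r,s}$ \emph{as braided categories with the $U_{r,s}$ Drinfel'd-double braiding}, more honest statement; the paper leaves this identification implicit. Your alternative of invoking the general bigraded-twist principle of Hodges--Levasseur--Toro would also suffice and is closer in spirit to Sections 5--6. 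The only caution is the one you already flag: fix the order of the weights in the scalar (the paper uses $\zeta(\mu',\mu)$, not $\zeta(\mu,\mu')$) consistently with the convention for $V^{q,\zeta}$, or the module-map check fails by an inverse.
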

\begin{proof}
Let $F^{\zeta}: V^{q}\lra V^{q,\zeta}$ be the functor between the
categories $\O^{q}$ and $\O^{q,\zeta}$. For any
$V^q,V'^q\in\text{Ob}({\O^{q}})$, there exists an isomorphism of
$U_{q,\zeta}(\mg)$-modules
$$
\xi_{V^q,V'^q}: (V^q\ot V'^q)^{\zeta}\lra V^{q,\zeta}\ot V'^{q,\zeta},
$$
where
$$
\xi_{V^q,V'^q}(v\ot v')=\zeta(\mu',\mu)v\ot v',\quad\forall\
v\in V_{\mu}^{q},\ v'\in V_{\mu'}'^{q}.
$$
Assume that $v\ot v'\in (V^q\ot V'^q)^{\zeta}_{\la}$, where
$\mu+\mu'=\la$. Then
\begin{align*}
\xi_{V^q,V'^q}(x\cdot_{\zeta}(v\ot v'))
&=\zeta(\al-\be,\la)\zeta(\al,\be)\xi_{V^q,V'^q}(x.(v\ot v'))\\
&= \zeta(\al-\be,\la)\zeta(\al,\be)\xi_{V^q,V'^q}(\De(x)(v\ot v'))
\end{align*}
\begin{align*}
&\quad= \zeta(\al-\be,\la)\zeta(\al,\be)\xi_{V^q,V'^q}\Bigl(\sum_{\ga} x_{\al,\ga}.v\ot x_{-\ga,\be}.v'\Bigr)\\
&\quad= \zeta(\al-\be,\mu+\mu')\zeta(\al,\be)\Bigl(\sum_{\ga}\zeta(\mu'+\be-\ga,\mu+\al+\ga) x_{\al,\ga}.v\ot x_{-\ga,\be}.v'\Bigr)\\
&\quad=\zeta(\mu',\mu)\zeta(\al,\mu)\zeta(\mu',\be)\Bigl(\sum_{\ga}\zeta(\mu+\mu'+\al+\be,\ga)
x_{\al,\ga}.v\ot x_{-\ga,\be}.v'\Bigr).
\end{align*}
On the other hand,
\begin{align*}
&x\cdot_{\zeta}(\xi_{V^q,V'^q}(v\ot v'))=\zeta(\mu,\mu')x\cdot_{\zeta}(v\ot v')\\
&\quad=\zeta(\mu',\mu)\Bigl(\sum x_{\al,\ga}\cdot_{\zeta}v\ot x_{-\ga,\be}\cdot_{\zeta}v'\Bigr)\\
&\quad=
\zeta(\mu',\mu)\Bigl(\sum_{\ga}\zeta(\al-\ga,\mu)\zeta(\al,\ga)\zeta(-\ga-\be,\mu')
\zeta(-\ga,\be)x_{\al,\ga}.v\ot x_{-\ga,\be}.v'\Bigr)\\
&\quad=
\zeta(\mu',\mu)\zeta(\al,\mu)\zeta(\mu',\be)\Bigl(\sum_{\ga}\zeta(\mu+\mu'+\al+\be,\ga)
x_{\al,\ga}.v\ot x_{-\ga,\be}.v'\Bigr).
\end{align*}
Hence,
$$
\xi_{V^q,V'^q}(x\cdot_{\zeta}(v\ot v'))=x\cdot_{\zeta}(\xi_{V^q,V'^q}(v\ot v')).
$$
That is, $\xi_{V^q,V'^q}$ is a homomorphism of
$U_{q,\zeta}$-modules. It is straightforward to prove that
$\xi_{V^q,V'^q}$ is an isomorphism. Next, we shall show that the
functor $F^{\zeta}$ preserves the braiding of $\O^{q}$. For any
$V^q,\,V'^q\in\text{Ob}(\O^{q})$, we define
$$
R^{q,\zeta}_{V^{q,\zeta},V'^{q,\zeta}}:=\xi_{V'^q,V^q}\circ
R^{q}_{V^q,V'^q}\circ \xi_{V^q,V'^q}^{-1}.
$$
Then $R^{q,\zeta}_{V^{q,\zeta},V'^{q,\zeta}}: V^{q,\zeta}\ot
V'^{q,\zeta}\lra V'^{q,\zeta}\ot V^{q,\zeta}$ is an isomorphism in
$\O^{q,\zeta}$ and $R^{q,\zeta}_{V^{q,\zeta},V^{q,\zeta}}$ satisfies
$$
R_{12}^{q,\zeta}R_{23}^{q,\zeta}R_{12}^{q,\zeta}=R_{23}^{q,\zeta}R_{12}^{q,\zeta}R_{23}^{q,\zeta}.
$$

This completes the proof.
\end{proof}

\begin{coro}
As braided tensor categories, we have the following equivalence.
\begin{eqnarray}
\O^{q}\simeq\O^{r,s}.
\end{eqnarray}
\end{coro}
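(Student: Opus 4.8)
The plan is to obtain the stated equivalence by composing two equivalences that are, essentially, already in place. By the theorem immediately above, $\O^{q}\simeq\O^{q,\zeta}$ as braided tensor categories, so it remains to produce a braided tensor equivalence $\O^{q,\zeta}\simeq\O^{r,s}$. For this I would use the Hopf algebra isomorphism of Proposition~\ref{bigrading}: write $\Psi\colon U_{r,s}(\mg)\xrightarrow{\ \sim\ }U_{q,\zeta}(\mg)$ for the isomorphism determined on generators by $e_i\mapsto E_i$, $f_i\mapsto F_i$, $\om_i\mapsto K_i$, $\om_i'\mapsto K_i'$, and let $\Psi^{*}$ denote the induced restriction-of-scalars functor from $U_{q,\zeta}(\mg)$-modules to $U_{r,s}(\mg)$-modules. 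Since $\Psi$ is an isomorphism of Hopf algebras, $\Psi^{*}$ is an equivalence of categories, it is monoidal (because $\Psi$ intertwines the comultiplications and the counits, the diagonal action on tensor products and the trivial module are carried across correctly), and it will be braided once one knows the two braidings correspond.

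First I would match up the objects. If $V^{q,\zeta}$ comes from $V^{q}=\bigoplus_{\la}V^{q}_{\la}\in\text{Ob}(\O^{q})$, the computation in the proof of the proposition preceding the theorem above gives $K_i\cdot_{\zeta}v=r^{\lan\la,\al_i\ran}s^{-\lan\al_i,\la\ran}v$ and $K_i'\cdot_{\zeta}v=r^{-\lan\al_i,\la\ran}s^{\lan\la,\al_i\ran}v$ for $v\in V^{q}_{\la}$. Hence $\Psi^{*}V^{q,\zeta}$, viewed as a $U_{r,s}(\mg)$-module, carries precisely the weight-space decomposition, with the identical set of weights, demanded of an object of $\O^{r,s}$; in particular the domination condition on $\wt(V)$ transfers verbatim. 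Conversely, untwisting (applying $(\Psi^{-1})^{*}$ and then the inverse of the twisting functor $F^{\zeta}$) carries any object of $\O^{r,s}$ back into $\O^{q}$, and $\Psi^{*}$ is fully faithful on morphisms; so $\Psi^{*}$ restricts to an equivalence $\O^{q,\zeta}\xrightarrow{\ \sim\ }\O^{r,s}$ of the underlying categories.

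It then remains to check that this equivalence respects the braidings. The braidings on $\O^{r,s}$ (constructed in \cite{BW2,BGH2}) and on $\O^{q,\zeta}$ are both obtained canonically from the Hopf structure together with the skew Hopf pairing, via the (suitably completed) Drinfel'd double / quasi-$R$-matrix formalism; by Proposition~\ref{deform3} and the corollary following it the relevant pairings coincide, $\lan\,,\,\ran_{q,\zeta}=\lan\,,\,\ran_{r,s}$, after transport by $\Psi$. Therefore $\Psi^{*}$ sends the $R$-matrix of $\O^{q,\zeta}$ to that of $\O^{r,s}$, so $\Psi^{*}$ is a braided tensor equivalence, and composing with the theorem above gives $\O^{q}\simeq\O^{q,\zeta}\xrightarrow{\ \Psi^{*}\ }\O^{r,s}$ as braided tensor categories.

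The one step that genuinely needs care — the expected main obstacle — is this braiding compatibility: one must be sure that the $R$-matrix used on $\O^{r,s}$ (the quasi-$R$-matrix $\Theta$ times the diagonal factor built from the pairing of weights) is exactly the image under $\Psi$ of the corresponding operator on $U_{q,\zeta}(\mg)$-modules. Routing this through the already-proved identification of skew Hopf pairings, rather than recomputing the braiding by hand, is what keeps the argument short; everything else reduces to unwinding the definitions.
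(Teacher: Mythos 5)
Your argument is essentially the paper's own (largely implicit) proof of this corollary: one composes the braided equivalence $\O^{q}\simeq\O^{q,\zeta}$ of the preceding theorem with the identification of $U_{q,\zeta}(\mg)$ with $U_{r,s}(\mg)$ from Proposition \ref{bigrading}, using Proposition \ref{deform3} and its corollary ($\lan\,,\,\ran_{q,\zeta}=\lan\,,\,\ran_{r,s}$) to match the braidings, and your write-up is if anything more explicit on that last point than the paper is. The only small correction is that the isomorphism of Proposition \ref{bigrading} is not literally $f_i\mapsto F_i$ but requires the rescaling $f_i\mapsto (s_iq_i)^{-1}F_i$ as in Theorem \ref{Thm1}, since $r_i-s_i=s_iq_i(q_i-q_i^{-1})$; this normalization does not affect your argument.
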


\bibliographystyle{amsalpha}

\end{document}